\documentclass[12pt]{article}
%\usepackage[pdftex]{graphicx}
%\DeclareGraphicsExtensions{.pdf, .jpg}
\usepackage{epsfig}

\usepackage{enumitem}
\setlist{nolistsep}

\usepackage{amssymb}
\usepackage{amsmath}

% for xfig
\usepackage{graphicx}
\usepackage[usenames,dvipsnames]{color}
% end for xfig

% external references
%\usepackage{xr}
%\externaldocument[TS-]{tgt-splash-paper}

\setlength{\topmargin}{-1.5cm} 
\setlength{\textheight}{22.8cm}
\setlength{\oddsidemargin}{-.2cm}
\setlength{\evensidemargin}{-1.2cm} 
\setlength{\textwidth}{16.8cm}
\setlength{\parindent}{0pt}
\setlength{\parskip}{.35cm}

% have replace the following commands with \bbn,\bbc

\newcommand\D{{\cal D}}

\renewcommand\L{{\mathscr L}}

\newtheorem{theorem}{Theorem}[section]
\newtheorem{lemma}[theorem]{Lemma}

\newtheorem{example}[theorem]{Example}

\newtheorem{conjecture}[theorem]{Conjecture}
\newtheorem{result}[theorem]{Result}
\newtheorem{remark}[theorem]{Remark}
\newenvironment{proof}{\noindent{\bf Proof}\hspace{0.5em}}
    { \null  \hfill $\square$ \par}

\usepackage[mathscr]{eucal}%Euler script, use \mathscr

% BruckBose equivalents

% conic and nrc directrix and extensions

\newcommand{\X}{\mathcal X}
\newcommand{\Y}{\mathcal Y}

\renewcommand{\S}{\mathcal S}
\renewcommand{\P}{\mathcal P}

%exterior splash and covers
\newcommand{\ES}{{\mathbb S}}

\newcommand{\ST}{\mathscr S_T}

%Transversals of the cover planes

\renewcommand{\r}{{q}}
\newcommand\rr{{\r^2}}

\newcommand{\li}{\ell_\infty}

\newcommand{\orsps}{order-$\r$-subplanes}
\newcommand{\orsls}{order-$\r$-sublines}
\newcommand{\orsp}{order-$\r$-subplane}
\newcommand{\orsl}{order-$\r$-subline}
\newcommand{\psline}{pencil-subline}
\newcommand{\dcsline}{dual-conic-subline}
\newcommand{\LpiP}{\L_{\pi,P}}

\newcommand{\elltau}{\ell= [-\tau\tau^q, \tau^q+\tau,-1]}

\newcommand{\st}{:}

\newcommand\PGammaL{{\mbox{P}\Gamma {L}}}
\newcommand\PGL{{\rm PGL}}
\newcommand\GF{{\rm GF}}
\newcommand\PG{{\rm PG}}

\newcommand{\car}{E} %carrier points

\renewcommand\setminus{\backslash}
%%%%
\newcommand{\Label}{\label}

%\newcommand{\LabelTable}[1]{\label{#1}{\rm [Table called {\it #1}]}}
%\newcommand{\Label}[1]{\label{#1}\marginpar{#1}}
%\newcommand{\Labele}[1]{\label{#1}{\mbox{\quad [{\tt #1}]}}}

%\usepackage{color}%already added above

%\newcommand\green[1]{{\color{green} #1}}

%This marks out the bits to delete%%%%%%%%%%%%%%%
%\newcommand\todelete[1]{{\color{green} #1}}
\newcommand\todelete[1]{{                 }}

\begin{document}
%Fancy header usage
%\pagestyle{fancy}
%\fancyhf{}
%\fancyhead[R]{} % predefined ()
%\fancyhead[L]{\color{Orange}\leftmark} % 1. sectionname
%\fancyfoot[C]{\color{Orange}\thepage}
%\lfoot{\color{Orange}\jobname}
%\rfoot{\color{Orange}\today}
%
%\fancypagestyle{plain}{%
%  \fancyhf{}%
%  \renewcommand{\headrulewidth}{0pt}%
%}
%

\title{Exterior splashes and linear sets of rank 3}
%\title{EXTERIOR subplanes of $\PG(2,\r^3)$ in the Bruck-Bose representation in $\PG(6,\r)$}

\author{S.G. Barwick and Wen-Ai Jackson
\date{20 April 2014}
%\\ School of Mathematical Sciences, University of Adelaide\\
%Adelaide 5005, Australia
%\\ \\
}

\maketitle
%\tableofcontents

Corresponding Author: Dr Susan Barwick, University of Adelaide, Adelaide
5005, Australia. Phone: +61 8 8313 3983, Fax: +61 8 8313 3696, email: susan.barwick@adelaide.edu.au

Keywords: subplanes, sublines, linear sets, Sherk
surfaces, circle geometries

AMS code: 51E20

%Corresponding Author: Dr Susan Barwick, University of Adelaide, Adelaide
%5005, Australia. Phone: +61 8 8303 3983, Fax: +61 8 8303 3696, email:
%susan.barwick@adelaide.edu.au

%Keywords: Bruck-Bose representation, Baer subplanes

%AMS code: 51E20

\begin{abstract}
In $\PG(2,q^3)$, let $\pi$ be a subplane of order $q$ that is exterior to $\li$. The exterior splash of $\pi$ is defined to be the set of $q^2+q+1$ points on $\li$ that lie on a line of $\pi$. This article investigates properties of an exterior \orsp\ and its exterior splash. We show that  the following objects are projectively equivalent: exterior splashes, covers of the circle geometry  $CG(3,q)$,  Sherk surfaces of size $q^2+q+1$, and   scattered linear sets of rank 3. We compare our construction of exterior splashes with the projection construction of a linear set. 
We give a geometric construction of the two different families of sublines in an exterior splash, and compare them to the known families of sublines in a scattered linear set of rank 3.
\end{abstract}

\section{Introduction}
Let $\pi$ be a subplane of $\PG(2,q^3)$ of order $q$ that meets $\li$ in 0 or 1 point. Each line of $\pi$, when extended to $\PG(2,q^3)$, meets $\li$ in a point. The set of points of $\li$ that lie on a line of $\pi$ is called the {\em splash} of $\pi$. If $\pi$ is tangent to $\li$ at a point $T$, then the splash of $\pi$ consists of $T=\pi\cap\li$, and $q^2$ further points. We call this splash a \emph{tangent splash}, and these were investigated in \cite{BJ-tgt1,BJ-tgt2}.
If $\pi$ is exterior to  $\li$, then the splash $\ES$ contains $q^2+q+1$ points, and we call it an {\em exterior splash}. 
Note that each line of $\pi$ contains a distinct point of $\ES$, and conversely, each point of $\ES$ lies on a unique line of $\pi$.  
More generally, we can define an exterior splash onto any exterior line. That is, let $\pi$ be a subplane of order $q$ of $\PG(2,q^3)$, and let $\ell$ be an exterior line of $\pi$, then the {\em exterior splash of $\pi$ onto $\ell$} is the set of $q^2+q+1$ points of $\ell$ that lie on lines of $\pi$. 

In this article we will investigate exterior \orsps\ and  exterior splashes.  In Section~\ref{sec:group} we consider the relationship of exterior splashes to other known objects. In particular, exterior splashes are projectively equivalent to  scattered linear sets of rank 3; covers of circle geometries $CG(3,q)$; and Sherk surfaces of size $q^2+q+1$.

The rest of the article focuses on the relationship between an exterior \orsp\ and its exterior splash. 
In Section~\ref{sec:orsl-splash}, we use \orsls\ contained in an exterior splash $\ES$ in terms of an associated \orsp\ $\pi$.
In an analogous manner to \cite[Remark 20]{lavr10}, we also discuss why projective bundles of conics of $\pi$ arise naturally as \orsls\ of $\ES$. 
%\blue{The significance of this result will be demonstrated in further work \cite{BJ-ext2}.}
In Section~\ref{sec:orsl-lin-ext},  we show that the two families of $q^2+q+1$ sublines in an exterior splash are equivalent in some sense, despite their apparent difference in definition.

A $\GF(q)$-linear set of rank 3 can be obtained by projecting an \orsp\ of $\PG(2,q^3)$ onto a line, see \cite{luna04}. 
In Section~\ref{sec:proj-ext}, we investigate the relationship between this projection construction of a scattered linear set of rank 3, and our construction of an exterior splash. In Section~\ref{sec:proj-tgt}, we revisit the tangent splash of a tangent \orsp, and look at the projection in this context.

Finally, in Section~\ref{sec:common-splash}, we look at two exterior \orsps\ that have a common exterior splash, and determine how they can intersect. Further, we show that there are exactly two \orsps\ that have a common exterior splash and share a fixed \orsl.

\section{Notation and definitions}

In this section we introduce the notation we use, as well as defining linear sets, circle geometries and Sherk surfaces. 

If $\pi$ is a subplane of $\PG(2,q^3)$ of order $q$, then we call $\pi$ an {\em\orsp}. If $\li$ is an exterior line of $\pi$, then we say $\pi$ is an {\em exterior} \orsp. An {\em\orsl} of $\PG(2,q^3)$ is a line of an \orsp, that is, it is isomorphic to  $\PG(1,q)$. 
Points in $\PG(2,\r^3)$ have homogeneous coordinates
$(x,y,z)$ with $x,y,z\in\GF(\r^3)$. Let the line at infinity $\li$ have
equation $z=0$; so the affine points of $\PG(2,\r^3)$ have coordinates
$(x,y,1)$. 
We can construct $\GF(q^3)$ as a cubic extension of $\GF(q)$ using a primitive element  $\tau$  with primitive
polynomial 
\begin{eqnarray}x^3-t_2x^2-t_1x-t_0,\label{t0t1t2}
\end{eqnarray} 
where $t_0,t_1,t_2\in\GF(q)$; so every element in $\GF(\r^3)$
can be uniquely written as $a_0+a_1\tau+a_2\tau^2$ with
$a_0,a_1,a_2\in\GF(\r)$. 

We define linear sets of $\PG(1,q^3)$, more generally, linear sets of $\PG(n-1,q^t)$ are defined in \cite{lavr10}. 
The points of $\PG(1,q^3)$ can be considered as elements of   the 2-dimensional vector space $V=\GF(q^3)^2$ over 
$\GF(q^3)$.
 Let $U$ be a subset of $V$ that forms a 3-dimensional vector space over $\GF(q)$. Then the vectors of $U$ (considered as vectors over $\GF(q^3)$) form a $\GF(q)$-linear set of rank 3 of $\PG(1,q^3)$. A \emph{scattered linear set of rank 3} is a $\GF(q)$-linear set of $\PG(1,q^3)$ of rank 3 and size $q^2+q+1$. 
By \cite{lavr10}, all scattered linear sets of rank 3 are projectively equivalent. Scattered linear sets were introduced in \cite{blok00}, and have recently been studied in \cite{dona14,lavr10,lavr13,LMPT14}.

In \cite{bruc73a,bruc73b}, Bruck gave a set of axioms for higher dimensional circle geometries. We look at the 3-dimensional case, namely $CG(3,q)$. The points of $CG(3,q)$ can be identified with the points of $\PG(1,q^3)$, and the \emph{circles} of $CG(3,q)$ are identified with the \orsls\ of $\PG(1,q^3)$. The {\em stability group} of a circle is defined to be the subgroup of Aut$CG(3,q)=\PGammaL(2,q^3)$ that fixes the circle pointwise. For two distinct points of $CG(3,q)$, let $\phi(P,Q)$ denote the group generated by the stability groups of all circles containing $P$ and $Q$. A {\em cover} is defined to be an orbit under $\phi(P,Q)$ of any point $R$ distinct from $P,Q$. In \cite{bruc73b}, it is shown that every cover of $CG(3,q)$ has $q^2+q+1$ points, and can be represented in one of the following two ways, using the identification of the points of $CG(3,q)$ with the field $\GF(q^3)\cup\{\infty\}$: \begin{enumerate}
\item[I.] $\{x\in\GF(q^3)\st N(x-a)=f\}$, for some $a\in\GF(q^3)$, and $f\in\GF(q)\setminus\{0\}$.
\item[II.] $\{x\in\GF(q^3)\cup\{\infty\}\st N\left(\frac{x-a}{x-b}\right)=f\}$, for some $a,b\in\GF(q^3)$, and $f\in\GF(q)\setminus\{0\}$.
\end{enumerate}
where $N$ is the norm from $\GF(q^3)$ to $\GF(q)$, that is $N(x)=x^{q^2+q+1}$. The points $P,Q$ are called the {\em carriers} of the cover. The carriers for a cover of type I are $\{a,\infty\}$, and the carriers for a cover of type II are $\{a,b\}$.
A cover of type II will contain $\infty$ if and only if $f=1$.

Sherk \cite{sher86}, studied objects other than circles and covers in $CG(3,q)$. Representing points of $CG(3,q)$ as elements of $\GF(q^3)\cup\{\infty\}$, a {\em Sherk surface} is the set of points satisfying
$$S(f,\alpha,\delta,g)=\{z\in\GF(q^3)\cup\{\infty\}\st fN(z)+T(\alpha^{q^2}z^{q+1})+T(\delta z)+g=0\}$$ for $f,g\in\GF(q)$, $\alpha,\delta\in\GF(q^3)$, where $N,T$ are the norm and trace from $\GF(q^3)$ to $\GF(q)$.  Sherk surfaces of size $q^2+q+1$ are precisely the Bruck covers of $CG(3,q)$. 

\section{Exterior splashes}\Label{sec:group}

\subsection{An important Singer group}\label{sec:singer-gp}

We will need a particular Singer group acting on  \orsps.  First we define the notion in $\PG(2,q^3)$ of conjugate  points and lines with respect to an \orsp.  Let $\pi$ be an \orsp\ of $\PG(2,q^3)$. There is a unique collineation group of order three that fixes every point of $\pi$, let $\zeta$ be a generator of this group. Then the points of $\PG(2,q^3)\setminus\pi$ can be partitioned into sets of size three of form $\{P, \zeta(P), \zeta^2(P)\}$, called {\em conjugate points with respect to $\pi$}.  Note that $\{P, \zeta(P), \zeta^2(P)\}$ are collinear if and only if they lie on the extension of an \orsl\ of $\pi$.
Similarly,
the lines of $\PG(2,q^3)\setminus\pi$ can be partitioned into sets of size three of form $\ell, \zeta(\ell), \zeta^2(\ell)$, called {\em conjugate lines with respect to $\pi$.}
For example, let $\pi=\PG(2,q)$, then $\zeta\colon (x,y,z)\mapsto (x^q,y^q,z^q)$ fixes every point of $\pi$ and acts semi-regularly on the remaining points of $\PG(2,q^3)$. If $P$ is a point of $\PG(2,q^3)\setminus\PG(2,q)$, then the conjugate points with respect to the \orsp\ $\pi=\PG(2,q)$ are $P,P^q,P^{q^2}$. 
If $\ell$ is a line of $\PG(2,q^3)\setminus\PG(2,q)$, then the conjugate lines with respect to the \orsp\ $\pi=\PG(2,q)$ are $\ell,\ell^q,\ell^{q^2}$. 

We need the following result about  the collineation groups acting on an exterior \orsp. 

\begin{theorem}\Label{propthm}
Consider  the collineation group $G=\PGL(3,q^3)$ acting on $\PG(2,q^3)$.
%\begin{enumerate}
%\item \Label{extsubgp} Let $K=G_\pi$ be the subgroup of $G$
% fixing an \orsp\ $\pi$. Then 
% \begin{enumerate}
% \item $|K|=q^3(q^3-1)(q^2-1)$.
%\item  $K$ has three orbits on the points of $\PG(2,\r^3)$: the points of $\pi$; the  points of $\PG(2,q^3)\setminus\pi$ that lie on exactly one line of $\pi$; and the  points of $\PG(2,q^3)\setminus\pi$ that lie on exactly zero lines of $\pi$.
%\item  $K$ has three line orbits: lines of $\pi$; the tangent lines of $\PG(2,\r^3)$; and the exterior lines of $\PG(2,\r^3)$.
%\end{enumerate}
%\item  \Label{singer}
Let $I=G_{\pi,\ell}$ be the subgroup of $G$
fixing an \orsp\ $\pi$, and a line $\ell$ exterior to $\pi$. 
Then 
\begin{enumerate}[noitemsep,nolistsep]
\item $I$ is cyclic of order $q^2+q+1$, and acts regularly on the points and on the lines of $\pi$.
\item $I$ fixes exactly three lines: $\ell$, and its conjugates $m$, $n$ with respect to $\pi$. 
Further $I$ acts semi-regularly on the remaining line orbits. 
\item $I$ \ fixes exactly three points: $E_1=\ell\cap m$, 
$E_2=\ell\cap n$, 
$E_3=m \cap n$ (which are conjugate with respect to $\pi$) and acts semi-regularly on the remaining point orbits.\\
The points $E_1,E_2$ are called the {\sl carriers} of $\pi$.
%\item If $\pi=\PG(2,q)$, then the fixed lines are $\ell, \ell^q,\ell^{q^2}$, and  the fixed points are
%$P=\ell\cap\ell^{q^2}$, 
%$P^q=\ell^{q}\cap\ell$, 
%$P^{q^2}=\ell^q\cap\ell^{q^2}$.
\end{enumerate}
%{\color{green}
%\item \Label{litrans} The subgroup of $G=\PGL(3,q)$ fixing a given line is transitive on the subplanes exterior to that line.}
%\end{enumerate}
\end{theorem}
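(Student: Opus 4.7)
The plan is to reduce to the subplane stabiliser $H:=G_\pi$. Since any projectivity preserving $\pi$ is determined by its action on a $\GF(q)$-rational frame, $H$ coincides with $\PGL(3,q)$ acting on $\PG(2,q^3)$ by $\GF(q)$-matrices, which is exactly the set of elements of $G$ that commute with $\zeta$ (a matrix in $\GL(3,q^3)$ has $\GF(q)$-entries up to scalar iff it commutes projectively with the Frobenius). Hence $I\le H$ commutes with $\zeta$, so the conjugate lines $m=\zeta(\ell)$ and $n=\zeta^2(\ell)$ are each individually fixed by $I$. These three lines are distinct and non-concurrent: a common intersection point would be $\zeta$-fixed, hence in $\pi$, contradicting the exteriority of $\ell$. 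So $\ell,m,n$ form a triangle whose vertices $E_1=\ell\cap m$, $E_2=\ell\cap n$, $E_3=m\cap n$ are fixed by $I$ pointwise and cyclically permuted by $\zeta$.

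I would then diagonalise over $\GF(q^3)$ in the basis $\{E_1,E_2,E_3\}$: every element of $I$ becomes a diagonal matrix $D=\mathrm{diag}(a,b,c)$. Writing $A$ for the change-of-basis matrix with columns $E_i$, the $\zeta$-conjugacy of the vertices gives $A^{(q)}=A\Pi$ for the cyclic permutation $\Pi$ of the basis. The requirement that $ADA^{-1}$ lie in $\PGL(3,q)$ (equivalently, be projectively $\zeta$-invariant) then reads $\Pi D^{(q)}\Pi^{-1}=\lambda D$ for some $\lambda\in\GF(q^3)^*$, which unwinds to three cyclic relations among $a,b,c$ and forces $\lambda^{q^2+q+1}=1$. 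Conversely, each $\lambda\in\ker N$, where $N$ is the norm from $\GF(q^3)$ to $\GF(q)$, yields a genuine element of $I$ after normalising one coordinate. Since $\ker N$ is cyclic of order $q^2+q+1$, this establishes (1).

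Regularity on $\pi$ is immediate: a non-identity $g\in I$ fixing some $P\in\pi$ would fix the frame $\{E_1,E_2,E_3,P\}$, which is in general position because the sides $\ell,m,n$ contain no point of $\pi$, forcing $g=\mathrm{id}$; the dual argument handles lines, and $|I|=q^2+q+1$ equals the number of points (and of lines) of $\pi$. For (2) and (3), a non-identity $g\in I$ has $\lambda\ne 1$ and diagonal entries $(1,\lambda^{-1},\lambda^{-q-1})$ which are pairwise distinct, using $\gcd(q,q^2+q+1)=\gcd(q+1,q^2+q+1)=1$; a diagonal matrix with three distinct eigenvalues fixes exactly the three coordinate lines and their pairwise intersections, so the only fixed lines and points of $I$ are $\ell,m,n$ and $E_1,E_2,E_3$, and $I$ acts semi-regularly on everything else. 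The main obstacle is the book-keeping in the middle paragraph: correctly translating ``projectively $\GF(q)$-rational in the original basis'' into the clean norm condition $\lambda^{q^2+q+1}=1$ via the Frobenius action on $A$.
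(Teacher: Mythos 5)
Your argument is correct, but it takes a genuinely different route from the paper. The paper works inside $K=G_\pi=\PGL(3,q)$ with an explicit Singer cycle: the companion matrix $T$ of the primitive polynomial, whose eigenvectors are $(1,\tau,\tau^2)$ and its conjugates; it then runs orbit--stabiliser computations to show $K_Q=\langle\phi\rangle$ for a point $Q$ on no line of $\pi$, dualises to get $I=\langle\phi'\rangle$ with matrix $T^{-t}$, and reads off the fixed points and lines from the eigenstructure of $T$ together with $T=T^q$. You instead argue structurally: $I$ commutes with $\zeta$, hence fixes the conjugate triangle $\ell,m,n$ and so diagonalises over its vertices, and the $\GF(q)$-rationality condition descends to $\Pi D^{(q)}\Pi^{-1}=\lambda D$, identifying $I$ with the norm-one subgroup of $\GF(q^3)^*$; semi-regularity then falls out of the pairwise distinctness of the eigenvalues $1,\lambda^{-1},\lambda^{-q-1}$ via the coprimality of $q$ and $q+1$ with $q^2+q+1$. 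Your version is shorter, coordinate-light, and gives the isomorphism $I\cong\ker N$ directly; the paper's version is longer but as a by-product establishes the orbit structure of $G_\pi$ on the points and lines of $\PG(2,q^3)$ (in particular its transitivity on exterior lines), which is cited again in the proof of Theorem~3.3, and produces the explicit generator reused in Example~3.2 --- so if your proof were substituted, those later arguments would need their own justification. Two small points worth making explicit: the distinctness of $\ell,\zeta(\ell),\zeta^2(\ell)$ (a $\zeta$-fixed line is the extension of a line of $\pi$, so cannot be exterior --- the paper builds this into its definition of conjugate lines), and the fact that the four points $E_1,E_2,E_3,P$ really form a frame because $m$ and $n$, being $\zeta$-images of an exterior line, are themselves exterior to $\pi$; you use the latter but state it only implicitly.
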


\begin{proof}
 We first consider the homography $\phi$ of $\PG(2,q^3)$ with matrix  $$
 T=\begin{pmatrix}0&1&0\\0&0&1\\t_0&t_1&t_2\end{pmatrix},$$
%
% \begin{equation}\label{xmatrix-T}
% T=\begin{pmatrix}0&1&0\\0&0&1\\t_0&t_1&t_2\end{pmatrix},
% \end{equation}
 where $t_0,t_1,t_2$ are as in (\ref{t0t1t2}) and  $\phi(x,y,z)=T(x,y,z)^t$.  The matrix $T$ has three eigenvalues $\tau$, $\tau^\r$ and $\tau^{\r^2}$ with corresponding eigenvectors $Q=(1,\tau,\tau^2)$, $Q^\r$ and $Q^{\rr}$ respectively.  
 Further, the eigenvalues of $T^i$ are 
 $\tau^{i},\tau^{i\r},\tau^{i\r^2}$ with corresponding  eigenvectors $Q,Q^\r,Q^{\r^2}$ respectively. 
 Now $\tau^i=\tau^{i\r}$ if and only if $\tau^i\in\GF(\r)$, if and only $\r^2+\r+1\bigm| i$, 
hence the eigenvalues are distinct. Thus, for $0<i<q^2+q+1$, $\phi^i$ fixes exactly three points $Q,Q^\r,Q^{\r^2}$. Further, $\phi$ has order $\r^2+\r+1$, and $\phi$ acts semi-regularly on the points other than $Q,Q^\r,Q^{\r^2}$. 
 In particular, $\langle\phi\rangle$ is a Singer cycle of the \orsp\ $\pi=\PG(2,q)$, and so acts regularly on the points and on the lines of $\pi$.
 
Now let $G=\PGL(3,q^3)$, 
it is well known that $|G|=\r^9(\r^9-1)(\r^6-1)$, and that $G$ is transitive on the points of $\PG(2,\r^3)$; on the lines of    $\PG(2,\r^3)$; and on the \orsp s of $\PG(2,\r^3)$.
Hence when considering the subgroup $K=G_\pi$ for some \orsp\ $\pi$, we can without loss of generality let $\pi=\PG(2,q)$.
Let $K=G_\pi=\PGL(2,\r)$. Thus $K$ is transitive on the points and lines of $\pi$, and so this gives one point orbit of $K$.   Further, $|K|=q^3(q^3-1)(q^2-1)$.

We now consider the point $P=(1,1,\tau)$ and calculate its orbit under $K$. Note that $P$ is not in $\pi$, but $P$ is on the  line $[1,-1,0]$, which is a line of $\pi$, so  $P$ lies on exactly one line of $\pi$.   Consider the subgroup of $K$ fixing $P$, so consider a homography with matrix $$C=\begin{pmatrix}
a&b&c\\d&e&f\\g&h&i
\end{pmatrix},
$$ with $a,\ldots,i\in\GF(\r)$, such that $C(1,1,\tau)^t\equiv(1,1,\tau)$.
Then $a+b+c\tau=d+e+f\tau$ (hence $c=f$ and $a+b=d+e$) and 
$(a+b+c\tau)\tau=g+h+i\tau$ giving $c=0$ (so $f=0$),  $a+b=i$, and $g+h=0$.
 Hence $$C=\begin{pmatrix}
a&b&0\\d&a+b-d&0\\g&-g&a+b\end{pmatrix}=\begin{pmatrix}a&1-a&0\\d&1-d&0\\g&-g&1\end{pmatrix}$$  since we can take $a+b=1$ as det $C\neq0$. Now det $C\neq0$ if and only if $a\neq d$. Hence the size of $K_P$ is $\r^2(\r-1)$. Thus by the orbit stabilizer theorem, $|P^K|=|K|/|K_P|=q(q^3-1)(q+1).$ This is the number of points of 
$\PG(2,q^3)\setminus\pi$ that lie on exactly one line of $\pi$, hence these points lie in one orbit of $K$.

We now consider the  points of $\PG(2,q^3)\setminus\pi$ that lie on exactly zero lines of $\pi$.
 Consider the point $Q=(1,\tau,\tau^2)$, 
 it does not lie on any line of $\PG(2,q)$. We calculate the size of the orbit of $Q$ under $K$. Note first that the subgroup of $K$ generated by the homography $\phi$ with matrix $T$  fixes $Q$. 
Now consider a homography of $K$ with matrix $$B=\begin{pmatrix}a&b&c\\d&e&f\\g&h&i\end{pmatrix},$$ $a,\ldots,i\in\GF(q)$ that fixes $Q$. We have $(a+b\tau+c\tau^2)\tau=d+e\tau+f\tau^2$ and $(d+e\tau+f\tau^2)\tau=g+h\tau+i\tau^2$. So we have $B=\big(\boldsymbol{x}, \boldsymbol{x}T, \boldsymbol{x}T^2\big)^t$ where $ \boldsymbol{x}=(a,b,c)$. Now $B$ has nonzero determinant if and only not all of $a,b,c$ are zero. Further, $\boldsymbol{x}=(a,b,c)$ and $\boldsymbol{x}=y(a,b,c)$ correspond to the same collineation if and only if $y\in\GF(\r)$. Thus the subgroup $K_Q$  is of size $\r^2+\r+1$. Hence $K_Q$  is generated by the Singer cycle $\phi$ with matrix $T$. By the orbit stabilizer theorem, $|Q^K|=|K|/|K_Q|=q^3(q^2-1)(q-1)$. This is equal to the number of points of $\PG(2,q^3)$ that lie on zero lines of $\pi=\PG(2,q)$. Hence these points all lie in one orbit of $K$. 
A similar argument shows that $K$ has three line orbits: the lines of $\pi$, the lines meeting $\pi$ in exactly one point, and the lines exterior to $\pi$.

As $K=G_\pi$ is transitive on the exterior lines of $\pi$, so we can without loss of generality consider the line $\ell=[1,\tau,\tau^2]$ which is exterior to $\pi=\PG(2,q)$, and we calculate $I=G_{\pi,\ell}$ for this $\pi,\ell$. 
 Consider the homography $\phi'$ which has matrix $T^{-t}$ (the transpose of the inverse of $T$).  Note that $\phi'$ has order $q^2+q+1$ as  $\phi$ does.
Now $\phi'$ maps a line $[l,m,n]$ of $\PG(2,q^3)$ to the line 
 $[l,m,n]T^t=\ell T^t$.
Hence $\phi'$ fixes the line $\ell=[1,\tau,\tau^2]$.
So using a similar argument to the above paragraph, we have $I=G_{\pi,\ell}$ is generated by the singer cycle $\phi'$. Hence $|I|=q^2+q+1$ and $I$ acts regularly on the points, and on the lines, of $\PG(2,\r)$, proving part 1.

As $T$ has entries in $\GF(\r)$, we have $T=T^\r$, hence 
$\phi'(\ell^q)=\ell^qT^t=(\ell T^t)^q=(\ell)^q=\ell^q$. That is $\phi'$ fixes $\ell^q$, similarly, $\phi'$ fixes $\ell^{q^2}$.
Hence $\phi'$ will fix the three points $P=\ell\cap\ell^{q^2}$, $P^q=\ell\cap\ell^q$, $P^{q^2}=\ell^q\cap\ell^{q^2}$.
A similar argument to that used when analysing the homography $\phi$ shows that $I$ acts semi-regularly on the  remaining line (and point) orbits of $\PG(2,q^3)$, proving parts 2 and 3.
%
%
%
%{\color{green}
%For part \ref{litrans}, $|G_\ell||\ell^G|=|G|$ so $|G_\ell|=q^9(q^9-1)(q^6-1)/(q^6+q^3+1)=q^9(q^3-1)(q^6-1)$. For an exterior subplane $\pi$ to $\ell$, we have $|\pi^{G_{\ell}}||G_{\ell,\pi}|=|G_{\ell}|$.  Hence $|\pi^{G_{\ell}}|=q^9(q^3-1)(q^6-1)/(q^2+q+1)=q^9(q+1)(q^6-1)$ by part \ref{singer}, which is equal to the number \orsp s exterior to $\ell$ by Theorem~\ref{simple-counts} part \ref{nspext}.
%}
 \end{proof}

The  Singer group $I=\PGL(3,q^3)_{\pi,\ell}$ has a number of important consequences.
We can  use $I$ to define a  
special family of conics and dual conics in  $\pi$ which play an important role in exterior splashes.  A conic of $\pi$ whose extension to $\PG(2,q^3)$ contains the three conjugate points $\car_1,\car_2,\car_3$ is called a $(\pi,\ell)$-{\em special conic} of $\pi$.
Dually we can define a $(\pi,\ell)$-{\em special-dual conic} to be an dual conic of $\pi$ that contains the three conjugate lines $\ell,m,n$. Special conics and dual conics are irreducible. Further, the two incidence structures with points the points of $\pi$, lines the 
$(\pi,\ell)$-special conics (respectively dual conics) of $\pi$, and natural incidence  are isomorphic to $\PG(2,q)$.
In addition, a \emph{projective bundle} of conics is defined in \cite{BBEF} to be a set  of $q^2+q+1$ conics of $\PG(2,q)$ which pairwise meet in exactly one point. 
There are only three known classes of projective  bundles, of which the $(\pi,\ell)$-special conics of $\pi$ form a {\em circumscribed bundle}, and the $(\pi,\ell)$-special-dual conics of $\pi$ form an {\em inscribed bundle}. 

\subsection{An example}

 We will need an  example  of an \orsp\ to prove subsequent results. 
 So the next result takes the \orsp\ $\pi=\PG(2,q)$ of $\PG(2,q^3)$, finds a suitable exterior line $\ell$ to 
$\pi$,  and calculates the exterior splash,  the Singer group $I$, and the carriers for  $\pi$. 

\begin{example}\Label{splash-eg}
Let $\pi=\PG(2,q)$ be an \orsp\ of $\PG(2,q^3)$.
\begin{enumerate}[noitemsep,nolistsep]
\item The  line $\elltau$  is exterior to $\pi$.
\item The exterior splash $\ES$ of $\pi=\PG(2,q)$ on $\elltau$ is
 $$
 \ES=
 \{\car+\theta \car^q\st
\theta^{q^2+q+1}=-1,\theta\in\GF(q^3)\}, \quad {\rm where\ } E=(1,\tau,\tau^2).$$
\item 
The Singer group $I=\PGL(3,q^3)_{\pi,\ell}$ is generated by the  homography $\phi$  with matrix
$$
 T=\begin{pmatrix}0&1&0\\0&0&1\\t_0&t_1&t_2\end{pmatrix},
$$ where $t_0,t_1,t_2$ are as in {\rm (\ref{t0t1t2})}, and $\phi(x,y,z)^t=T(x,y,z)^t$.  The three fixed  points of $\phi$  are   the  points  $E=\ell^{q^2}\cap\ell=(1,\tau,\tau^2)$, $E^q,E^{q^2}$. The three fixed lines of $\phi$ are  $\ell, \ell^q,\ell^{q^2}$. So $\pi$ has carriers
$\car=(1,\tau,\tau^2)$ and $\car^q=(1,\tau^q,\tau^{2q})$.
\end{enumerate}
\end{example}

\begin{proof} We first show that the line $\elltau$ is exterior to $\pi=\PG(2,q)$. 
The points of $\pi$ are of the form $(x,y,z)$,  $x,y,z\in\GF(q)$, not all zero. This point lies on the line 
$\elltau$
 if and only if $\tau\tau^qx-(\tau+\tau^q)y+z=0$ for some $x,y,z\in\GF(q)$, not all zero. As  $1,\tau\tau^q,\tau+\tau^q$ of $\GF(q^3)$ are linearly independent over $\GF(q)$,   $\tau\tau^qx-(\tau+\tau^q)y+z=0$ has no solutions with $x,y,z\in\GF(q)$, not all zero, thus $\pi$ is exterior to $\ell$. 
We can parameterize the line $\ell$ as $\ell=\{E+\theta E^q\st\theta\in\GF(q^3)\cup\{\infty\}\}$, where $E=(1,\tau,\tau^2)$ (so $E$ has parameter $0$, and $E^q$ has parameter $\infty$). 
Consider a line $\ell_i$ of $\pi$, so $\ell_i=[l,m,n]$, $l,m,n\in\GF(q)$, not all zero. Then $\ell_i$ meets the line $\elltau$ in the point $X_i=(m+n(\tau+\tau^q),\ n\tau\tau^q-l,\ -l(\tau+\tau^q)-m\tau\tau^q).$
We can write this as $X_i=\car+\theta_i\car^q$ where 
$\theta_i=-(l+m\tau+n\tau^{2})/(l+m\tau+n\tau^2)^q$.
Note that  
$\theta_i^{q^2+q+1}=\theta_i^{q^2}\theta_i^q\theta_i=-1$. As there are $q^2+q+1$ solutions to the equation $\theta^{q^2+q+1}=-1$, the $q^2+q+1$ lines of $\pi$ meet $\ell$ in the $q^2+q+1$ points $\car+\theta\car^q$ with $\theta^{q^2+q+1}=-1$. That is, 
 $\pi$ has exterior splash $\ES=\{\car+\theta \car^q\st
\theta^{q^2+q+1}=-1,\theta\in\GF(q^3)\}$ on $\ell$, completing the proof of part 2. The proof of part 3 involves straightforward  calculations.
% We can map the point $\car+\theta\car^q$, with $E=(1,\tau,\tau^2)$  and $\theta=-(l+m\tau+n\tau^{2})/(l+m\tau+n\tau^2)^q$ to $(l+m\tau+n\tau^{2},(l+m\tau+n\tau^2)^q,0)$ by ...... \red{how do we do it???}
\end{proof}

\subsection{Exterior splashes and linear sets}

 The following result  is important to our study of exterior \orsps\ and exterior splashes.

\begin{theorem}\Label{transsplashes}
Consider the collineation group  $G=\PGL(3,\r^3)$ acting on $\PG(2,q^3)$. The 
subgroup $G_\ell$ fixing a line $\ell$ is transitive on the \orsps\ that are exterior to $\ell$, and is transitive on the
exterior splashes on $\ell$. 
\end{theorem}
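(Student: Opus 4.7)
The plan is to bootstrap from two transitivity facts already established inside the proof of Theorem~\ref{propthm}: namely, that $G=\PGL(3,q^3)$ is transitive on \orsps\ of $\PG(2,q^3)$, and that for any fixed \orsp\ $\pi$ the stabiliser $K=G_\pi$ has exactly three line orbits, one of which is the set of exterior lines to $\pi$.

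For the transitivity of $G_\ell$ on exterior \orsps, I would argue by a standard double-coset-style swap. Let $\pi_1,\pi_2$ be two \orsps\ exterior to $\ell$. By transitivity of $G$ on \orsps\ there is $g\in G$ with $g(\pi_1)=\pi_2$. Then $g(\ell)$ is exterior to $\pi_2$, since exterior-ness is preserved by collineations. By the line-orbit statement in the proof of Theorem~\ref{propthm}, $G_{\pi_2}$ is transitive on the exterior lines of $\pi_2$, so there exists $h\in G_{\pi_2}$ with $h(g(\ell))=\ell$. The composition $hg$ then fixes $\ell$ and sends $\pi_1$ to $\pi_2$, giving the required element of $G_\ell$.

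The transitivity on exterior splashes on $\ell$ is an immediate consequence. If $\ES_1,\ES_2$ are exterior splashes on $\ell$, then by definition each $\ES_i$ is the splash of some exterior \orsp\ $\pi_i$. Applying the first part, pick $g\in G_\ell$ with $g(\pi_1)=\pi_2$. Since $g$ is a collineation fixing $\ell$ setwise, it sends lines of $\pi_1$ to lines of $\pi_2$ and preserves incidence with $\ell$, hence it sends the splash of $\pi_1$ on $\ell$ to the splash of $\pi_2$ on $\ell$; that is, $g(\ES_1)=\ES_2$.

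There is no real obstacle here: the substantive work has already been done in Theorem~\ref{propthm}, where the point and line orbits of $G_\pi=\PGL(3,q)$ were computed. The only thing to be careful about is to cite the exterior-line orbit from that proof rather than re-deriving it, and to note that $g\in G_\ell$ automatically carries splashes on $\ell$ to splashes on $\ell$ because both incidence with $\ell$ and membership in the image subplane are preserved.
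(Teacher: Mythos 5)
Your proof is correct and follows essentially the same route as the paper: both rely on the transitivity of $G$ on \orsps\ together with the transitivity of $G_\pi$ on exterior lines (established in the proof of Theorem~\ref{propthm}), which combine to give transitivity on pairs $(\pi,\ell)$ and hence the stated result. Your write-up merely makes the standard ``swap'' argument explicit where the paper states it in one line, and the deduction for splashes is identical.
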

\begin{proof}
Recall that the group 
$G=\PGL(3,q^3)$ is transitive on the \orsp s of $\PG(2,\r^3)$.  From the proof of Theorem~\ref{propthm}, the subgroup of $K$ fixing an \orsp\ $\pi$ is transitive on the exterior lines of $\pi$. Hence  $G$ is transitive on pairs $(\pi,\ell)$ where $\pi$ is an \orsp\ and $\ell$ is a line exterior to $\pi$.
Thus $G_\ell$ is transitive on the the \orsps\ that are exterior to $\ell$, and hence $G_\ell$ is transitive on the
exterior splashes of $\ell$.
\end{proof}

The importance of this result is two fold. Firstly, it means that  when proving results about  exterior \orsps\ and  exterior splashes, we can without loss of generality prove results for a particular example \orsp.
Secondly,  this theorem shows that all exterior splashes are projectively equivalent. It is then straightforward to use Example~\ref{splash-eg}, and the result that all scattered linear sets of rank 3 are projectively equivalent by \cite{lavr10}, to prove that  exterior splashes of $\PG(2,q^3)$ are projectively equivalent to scattered linear sets of rank 3. 

\begin{theorem}\Label{lin-set}
Exterior splashes of $\PG(1,q^3)$ are projectively equivalent to  scattered linear sets of rank 3.
\end{theorem}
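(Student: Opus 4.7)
The plan is to leverage two transitivity results to reduce the statement to checking a single explicit example. By Theorem~\ref{transsplashes}, the group $G_\ell = \PGL(3,q^3)_\ell$ acts transitively on exterior splashes of $\ell$, so any two exterior splashes of $\PG(1,q^3) \cong \ell$ are projectively equivalent. By the cited result of \cite{lavr10}, any two scattered linear sets of rank $3$ in $\PG(1,q^3)$ are projectively equivalent. Thus it suffices to exhibit one exterior splash that coincides with a scattered linear set of rank $3$.

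The natural candidate is the explicit splash computed in Example~\ref{splash-eg}: with $\pi = \PG(2,q)$, the exterior line $\ell = \elltau$, and $E = (1,\tau,\tau^2)$, the splash is
\[
\ES = \{E + \theta E^q : \theta \in \GF(q^3),\ \theta^{q^2+q+1} = -1\}.
\]
First I would parameterise $\ell$ as a copy of $\PG(1,q^3)$ using the basis $\{E, E^q\}$ of the underlying $\GF(q^3)$-vector space $V$ of $\ell$, so that the point $E + \theta E^q$ has homogeneous coordinates $(1,\theta)$. The splash is then identified with $\{(1,\theta) : N(\theta) = -1\}$, where $N$ is the norm from $\GF(q^3)$ to $\GF(q)$.

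Next I would construct an explicit $\GF(q)$-subspace of $V$ realising this set as a linear set. Since $N$ is surjective, pick $\sigma \in \GF(q^3)$ with $N(\sigma) = -1$ and define
\[
U = \{(a,\sigma a^q) : a \in \GF(q^3)\} \subset V.
\]
Because $a \mapsto \sigma a^q$ is $\GF(q)$-linear (but not $\GF(q^3)$-linear), $U$ is a $3$-dimensional $\GF(q)$-subspace of $V$. For $a \neq 0$ the vector $(a,\sigma a^q)$ spans the $\GF(q^3)$-point $(1,\sigma a^{q-1})$, and $N(\sigma a^{q-1}) = N(\sigma)\, a^{(q-1)(q^2+q+1)} = -1$. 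Conversely, since the kernel of the norm equals the image of the Hilbert~90 map $a \mapsto a^{q-1}$ (both of order $q^2+q+1$), every $\theta$ with $N(\theta) = -1$ is of the form $\sigma a^{q-1}$ for some $a \in \GF(q^3)^*$. Hence the linear set determined by $U$ is exactly $\ES$.

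Finally I would verify that this linear set is scattered: two vectors $(a_1,\sigma a_1^q)$ and $(a_2,\sigma a_2^q)$ determine the same projective point iff $a_2 = \lambda a_1$ with $\lambda^q = \lambda$, i.e.\ $\lambda \in \GF(q)$. Thus the $q^3-1$ nonzero vectors of $U$ form $(q^3-1)/(q-1) = q^2+q+1$ orbits, each contributing a distinct point, and each point arises from a $1$-dimensional $\GF(q)$-subspace of $U$. So $\ES$ is a scattered $\GF(q)$-linear set of rank $3$, and combined with the two transitivity statements above, this proves the theorem. The only real obstacle is the verification step, but it reduces to the standard Hilbert~90 identification of the norm kernel with the image of $a \mapsto a^{q-1}$.
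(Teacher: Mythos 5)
Your proposal is correct and follows the same route the paper takes: reduce via Theorem~\ref{transsplashes} and the equivalence of scattered rank-3 linear sets from \cite{lavr10} to checking that the single splash of Example~\ref{splash-eg} is a scattered linear set of rank 3. The paper leaves that verification as ``straightforward''; your explicit subspace $U=\{(a,\sigma a^q)\}$ with the Hilbert~90 identification of the norm kernel supplies exactly the missing computation, and it checks out.
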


We note that in concurrent, independent work, Lavrauw and Zanella \cite{lavr14} prove the more general result that  exterior splashes of $\PG(1,q^n)$ are projectively equivalent to scattered linear sets of $\PG(1,q^n)$.

\subsection{Circle geometries and Sherk surfaces}
\Label{sec:def-carrier}

%\begin{lemma} \Label{carriersdist} Let $\pi$ be an exterior  \orsp\ with exterior splash $\ES$, then the carriers of $\pi$ are not contained in $\ES$. 
%\end{lemma}
%
%
%\begin{proof} 
%Let $\pi$ be an \orsp\ with exterior splash $\ES$ on the exterior line $\ell$. 
%By Theorem~\ref{propthm}(\ref{singer}), the subgroup $I=G_{\pi,\ell}$ fixes the carriers of $\pi$, and  is transitive on the elements of $\ES$. Hence the carriers of $\pi$ do not lie in $\ES$.
%\end{proof}

In this section we observe that exterior splashes are projectively equivalent to covers of the circle geometry $CG(3,q)$, and to Sherk surfaces of size $q^2+q+1$.

\begin{theorem}\Label{spiscov}
Let $\pi$ be an exterior \orsp\ with exterior splash $\ES$ on $\li=\PG(1,q^3)$ and carriers $E_1,E_2$. Then $\ES$ is projectively equivalent to  a cover of the circle geometry $CG(3,q)$  with carriers $E_1,E_2$.
\end{theorem}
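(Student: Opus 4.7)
The plan is to reduce to the explicit situation of Example~\ref{splash-eg} and then directly recognize the splash, written in a natural coordinate system on $\ell_\infty$, as a cover of type I from Section~2.

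First I would apply Theorem~\ref{transsplashes}: the subgroup of $\PGL(3,q^3)$ fixing $\ell_\infty$ acts transitively on exterior splashes on $\ell_\infty$, and this action also sends the carrier pair of one \orsp\ to the carrier pair of its image (the carriers are intrinsically defined by Theorem~\ref{propthm} as the points of $\ell_\infty$ fixed by the stabiliser $I=\PGL(3,q^3)_{\pi,\ell_\infty}$). So up to a projectivity of $\PG(2,q^3)$ fixing $\ell_\infty$, we may assume we are in the model of Example~\ref{splash-eg}: $\pi=\PG(2,q)$, $\ell_\infty=\ell=[-\tau\tau^q,\tau^q+\tau,-1]$, the carriers are $E_1=E=(1,\tau,\tau^2)$ and $E_2=E^q$, and
$$\ES \;=\; \{\,E+\theta E^q \st \theta\in\GF(q^3),\ \theta^{q^2+q+1}=-1\,\}.$$

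Next I would write down the explicit projective identification $\psi\st\ell\to\PG(1,q^3)=\GF(q^3)\cup\{\infty\}$ given by $\psi(E+\theta E^q)=\theta$ and $\psi(E^q)=\infty$. Since $\{E,E^q\}$ is a basis of the 2-dimensional $\GF(q^3)$-vector space underlying $\ell$, the map $\psi$ is a homography, hence a projective equivalence. Under $\psi$ the carriers go to $E_1\mapsto 0$ and $E_2\mapsto \infty$, and the splash is sent to
$$\psi(\ES)=\{\,\theta\in\GF(q^3) \st N(\theta)=-1\,\},$$
using that $\theta^{q^2+q+1}=N(\theta)$.

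Finally, I would observe that $\psi(\ES)$ has exactly the form of a type~I cover of $CG(3,q)$ as listed in Section~2, namely $\{x\in\GF(q^3)\st N(x-a)=f\}$ with $a=0$ and $f=-1\in\GF(q)\setminus\{0\}$, whose carriers are $\{a,\infty\}=\{0,\infty\}$. Pulling back via $\psi^{-1}$, this cover is identified with $\ES$ and its carriers with $E_1,E_2$, completing the proof.

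There is really no main obstacle: Example~\ref{splash-eg} has already done the work of producing the norm-form description of $\ES$, and Theorem~\ref{transsplashes} guarantees we lose no generality by specialising. The only thing to be careful about is that the identification $\psi$ is a genuine projectivity and that it sends the intrinsically defined carriers to the cover's carriers, both of which follow immediately from the choice of $\{E,E^q\}$ as a basis together with Theorem~\ref{propthm}(3).
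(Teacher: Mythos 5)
Your proposal is correct and follows essentially the same route as the paper: reduce via Theorem~\ref{transsplashes} to the model of Example~\ref{splash-eg}, then read off the splash as the type~I cover with $a=0$, $f=-1$ and carriers $\{0,\infty\}$ corresponding to $E_1,E_2$. The only difference is that you make the identifying homography $\psi$ explicit, which the paper leaves implicit.
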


\begin{proof} By Theorem~\ref{transsplashes}, we can without loss of generality prove this for the exterior splash $\ES$ of the \orsp\  $\pi=\PG(2,\r)$ onto the exterior line $\elltau$. 
By Example~\ref{splash-eg}, $\ES$ has  exterior splash $\ES=\{\car+\theta \car^q\st
\theta^{q^2+q+1}=-1,\theta\in\GF(q^3)\}$ on $\ell$, and carriers
 $E_1=\car=(1,\tau,\tau^2)$ and $E_2=\car^q$ (corresponding to $\theta=0$ and $\theta=\infty$ respectively).
Hence using the notation of Section 2, the exterior splash is 
a cover of type I with $a=0$, $f=-1$.
\end{proof}

We note that this equivalence can also be deduced from the general theory of linear sets of pseudoregulus type in \cite{LMPT14}. In particular, an exterior splash $\ES$ is equivalent to a scattered linear set of rank 3 of pseudoregulus type. The carriers of $\ES$ are called transversal spaces of the pseudoregulus.  
There are two  important corollaries for exterior splashes, we note that they both follow immediately from properties of circle geometries, and are also proved directly in \cite{LMPT14}. We state them here in the exterior splash context as they are important for understanding the carriers of an exterior splash.

\begin{result}\Label{carriers-belong-splash}
\begin{enumerate}
\item  Let $\pi_1,\pi_2$ be two exterior \orsps\ with the same exterior splash $\ES$ on $\li$, then $\pi_1$ and $\pi_2$ have the same carriers. That is, an exterior splash has two unique carriers. 
\item Given two carriers $\car_1,\car_2$ of $\li$, there are exactly $q-1$ disjoint exterior splashes with carriers $\car_1,\car_2$.
\end{enumerate}
\end{result}

\begin{remark} {\rm As the Sherk surfaces of size $q^2+q+1$ are precisely the Bruck covers of $CG(3,q)$, exterior splashes are also projectively equivalent to Sherk surfaces of size $q^2+q+1$. In fact, the Sherk surface corresponding to the exterior splash of the \orsp\ $\pi=\PG(2,q)$ onto the exterior line $\elltau$ has parameters $S(1,0,0,1)$.
This equivalence means we can adapt results about permutation groups of Sherk surfaces from \cite{sher86} to our setting, also see \cite{dona14}. In particular we will need the following collineation group.  
}\end{remark}

%\begin{lemma}\Label{splash-group}
%Let $\pi$ be an exterior \orsp\ with exterior splash $\ES$ on $\li$.  The group of collineations of $\PG(1,q^3)$ fixing $\ES$ has 
% order $2(q^2+q+1)$.  It is generated by a homography of order $q^2+q+1$ and an involution.  The orbits on $\li$ are all of size $q^2+q+1$, except for one orbit of size two containing the carriers of $\pi$.
%\end{lemma}
%

\begin{lemma} \Label{spl-gp-eg} Let $\ES$ be the exterior splash of the \orsp\ $\pi=\PG(2,q)$ onto the exterior line $\elltau$.
Then  $\ES\equiv\{(\theta,1)\st\theta^{q^2+q+1}=-1,\theta\in\GF(q^3)\}$, where $(\theta,1)\equiv\theta E_1+E_2$. The group of collineations of $\ell$ fixing $\ES$ has as generators  two homographies $\Gamma$, $\Delta$ with matrices 
\[
\begin{pmatrix}
\tau&0\\0&\tau^q
\end{pmatrix} \quad {\rm and}\quad
\begin{pmatrix}
0&1\\1&0
\end{pmatrix}
\]
respectively; so   $\Gamma$ fixes the carriers $\car_1,\car_2$, and $\Delta$ interchanges them. 
\end{lemma}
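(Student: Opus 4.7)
My plan has three steps: (i) reparameterise $\ES$ with respect to the basis $(E_1,E_2)$ on $\ell$; (ii) verify directly that $\Gamma$ and $\Delta$ stabilise $\ES$ and act on the carriers as claimed; and (iii) argue that these two homographies already generate the entire stabiliser.

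For step (i), Example~\ref{splash-eg} gives $\ES=\{E+\lambda E^q\st\lambda^{q^2+q+1}=-1\}$ with $E_1=E$ and $E_2=E^q$. Projectively, $E+\lambda E^q\equiv\lambda^{-1}E+E^q$, which in the coordinates $\theta E_1+E_2$ reads $(\theta,1)$ with $\theta=\lambda^{-1}$. Since $\theta^{q^2+q+1}=(-1)^{-1}=-1$ as well, one recovers the stated form $\ES\equiv\{(\theta,1)\st\theta^{q^2+q+1}=-1\}$; the carriers $(1,0)$ and $(0,1)$ are correctly excluded.

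For step (ii), $\Gamma$ sends $(\theta,1)$ to $(\tau\theta,\tau^q)\equiv(\tau^{1-q}\theta,1)$; using $(1-q)(q^2+q+1)=1-q^3$ together with $\tau^{q^3}=\tau$ gives $(\tau^{1-q})^{q^2+q+1}=1$, so $(\tau^{1-q}\theta)^{q^2+q+1}=\theta^{q^2+q+1}=-1$ and $\Gamma$ preserves $\ES$; clearly $\Gamma$ fixes both $E_1$ and $E_2$. Similarly $\Delta$ sends $(\theta,1)$ to $(1,\theta)\equiv(\theta^{-1},1)$ and $(\theta^{-1})^{q^2+q+1}=-1$, so $\Delta$ preserves $\ES$ while swapping the carriers.

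For step (iii), Result~\ref{carriers-belong-splash} tells us the carriers are intrinsic to $\ES$, so any homography of $\ell$ fixing $\ES$ must permute $\{E_1,E_2\}$. The subgroup fixing both carriers is the diagonal torus, acting as $(\theta,1)\mapsto(\alpha\theta,1)$ for some $\alpha\in\GF(q^3)^*$; preserving the norm condition forces $\alpha^{q^2+q+1}=1$, confining $\alpha$ to the norm-one subgroup, which is cyclic of order $q^2+q+1$. Since $\tau$ is a primitive element of $\GF(q^3)$ and $\gcd(q^3-1,q-1)=q-1$, the multiplier $\tau^{1-q}$ has order exactly $q^2+q+1$, so $\langle\Gamma\rangle$ fills out this subgroup; adjoining the involution $\Delta$ that interchanges the carriers then yields the full stabiliser of order $2(q^2+q+1)$. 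The main obstacle will be keeping straight the two parameterisations of $\ell$; once that bookkeeping is in hand, the rest reduces to a short order computation in $\GF(q^3)^*$ together with the uniqueness of carriers.
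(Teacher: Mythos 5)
Your argument is correct, but it takes a genuinely different route from the paper: the paper gives no proof of this lemma at all, instead noting in the preceding remark that $\ES$ is the Sherk surface $S(1,0,0,1)$ and that the permutation groups of Sherk surfaces are determined in \cite{sher86}, so the statement is imported from there. Your proof is a self-contained verification: step (i) is the right change of parameter (inversion $\theta\mapsto\theta^{-1}$ preserves the condition $\theta^{q^2+q+1}=-1$); step (ii) is a correct direct check (the multiplier $\tau^{1-q}$ has norm $\tau^{1-q^3}=1$); and step (iii) correctly identifies the two-carrier stabiliser with the norm-one subgroup of $\GF(q^3)^*$, of order $q^2+q+1$, and shows $\tau^{1-q}$ generates it since its order is $(q^3-1)/\gcd(q^3-1,q-1)=q^2+q+1$. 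Two small points deserve to be made explicit. First, to get from Result~\ref{carriers-belong-splash}(1) to ``every homography of $\ell$ fixing $\ES$ permutes $\{E_1,E_2\}$'' you should extend the homography of $\ell$ to one of $\PG(2,q^3)$ fixing $\ell$; this carries an \orsp\ with splash $\ES$ to another \orsp\ with splash $\ES$ and carries carriers to carriers, and then uniqueness of the carrier pair finishes the claim. Second, what you have computed is the stabiliser of $\ES$ in $\PGL(2,q^3)$, not in $\PGammaL(2,q^3)$: the semilinear map $(\theta,1)\mapsto(\theta^q,1)$ with respect to the frame $E_1,E_2$ also preserves $\{\theta\st\theta^{q^2+q+1}=-1\}$ and is not a homography, so the word ``collineations'' in the statement must be read as ``projectivities'' (this reading is consistent with the order $2(q^2+q+1)$ the authors use later in Section~\ref{sec:proj-ext}). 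That is a defect of the statement rather than of your argument, but your step (iii) should state explicitly that it is confined to $\PGL(2,q^3)$. With those clarifications your proof is complete, and it has the advantage over the paper's citation of \cite{sher86} of being elementary and of exhibiting the generators explicitly in the splash coordinates.
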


\section{Order-$q$-sublines of an exterior splash}\Label{sec:orsl-splash}

We have so far discussed the relation between exterior splashes and other geometrical objects in their own right.  For the remainder of the paper we will primarily consider the relationship between an exterior \orsp\ and its exterior splash.
In this section we investigate the \orsls\ contained in an exterior splash. We
first characterise the  \orsls\ of an exterior splash {\em with respect to} an associated exterior \orsp. Then we compare this with the characterisation of \orsls\ of a scattered linear set discussed in \cite{lavr10}.

\subsection{Sublines in an exterior splash}\Label{sec:orsl-extsplash}

%We can construct two Desarguesian planes from the special conics and special-dual conics. The incidence structure with points the points of $\pi$, lines the 
%$(\pi,\li)$-special conics of $\pi$, and natural incidence  is isomorphic to $\PG(2,q)$.
%Further, the incidence structure with points the points of $\pi$, lines the 
% $(\pi,\li)$-special-dual conics of $\pi$, and natural incidence is isomorphic to $\PG(2,q)$.
% 

%
%We now examine the  \orsls\ in an exterior splash. Note that if $q=2$ then we are in the degenerate case where every set of 3 collinear points is an \orsl, hence we need $q>2$ for the results in this section.

We first note that the dual of \cite[Corollary 19]{lavr10} can be generalised to the following result.

\begin{lemma}\Label{secondlines}
Let $\pi$ be an exterior \orsp\ of $\PG(2,q^3)$ with exterior splash $\ES$ on $\li$, and let
 $\Gamma$ be a dual $(q+1)$-arc of $\pi$. The lines of $\Gamma$ meet the exterior splash $\ES$ in an \orsl\ if and only if $\Gamma$ is $(\pi,\li)$-special-dual conic of $\pi$.
\end{lemma}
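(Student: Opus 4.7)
The plan is to reduce to the coordinate model of Example~\ref{splash-eg} (via Theorem~\ref{transsplashes}), prove the forward direction via a Möbius-isomorphism argument on the extension of the dual conic to $\PG(2,q^3)$, and use rigidity together with bijectivity for the converse.

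For the ``if'' direction ($\Leftarrow$), suppose $\Gamma$ is a $(\pi,\li)$-special-dual conic. Let $\tilde\Gamma\subset\PG(2,q^3)$ be its extension, a dual conic defined over $\GF(q)$ and containing the three conjugate lines $\li,\li^q,\li^{q^2}$, and let $C\subset\PG(2,q^3)$ be the non-degenerate conic whose tangent lines constitute $\tilde\Gamma$. Because $\li\in\tilde\Gamma$, the line $\li$ is tangent to $C$ at a unique point $P_0$. The map $f\colon C\to\li$ sending $P\mapsto T_PC\cap\li$, extended by $f(P_0)=P_0$, is a Möbius isomorphism between two copies of $\PG(1,q^3)$. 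Since $C$ is defined over $\GF(q)$, the $\GF(q)$-rational points $C(\GF(q))=C\cap\pi$ form an \orsl\ of $C$, and the lines of $\Gamma$ are exactly the tangents to $C$ at the points of $C(\GF(q))$. Hence the splash of $\Gamma$ equals $f(C(\GF(q)))$, which is the image of an \orsl\ under a Möbius isomorphism, and is therefore itself an \orsl\ of $\li$.

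For the ``only if'' direction ($\Rightarrow$), suppose $\Gamma$ is a dual $(q+1)$-arc with splash an \orsl\ $S$ of $\li$. For $q$ odd, Segre's theorem implies $\Gamma$ is a dual conic, with extension $\tilde\Gamma$ and underlying conic $C$ in $\PG(2,q^3)$. The same map $P\mapsto T_PC\cap\li$ is a Möbius isomorphism when $\li$ is tangent to $C$, and a degree-$2$ rational map when $\li$ is a secant or exterior line of $C$. In the degree-$2$ case I would show that the image $f(C(\GF(q)))$ cannot form an \orsl\ of $\li$, forcing $\li$ to be tangent to $C$, so that $\Gamma$ is $(\pi,\li)$-special-dual. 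For $q$ even, where non-conic dual arcs may occur, I would instead invoke the bijectivity of the splash map between lines of $\pi$ and $\ES$ from Example~\ref{splash-eg}, together with the classification of \orsls\ of $\ES$ into pencil-sublines (preimage a concurrent pencil) and dual-conic-sublines (preimage a special-dual conic), noting that the non-concurrent lines of a dual arc cannot produce a pencil-subline.

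The main obstacle will be the rigidity statement in the converse: that a genuine degree-$2$ rational map $C\to\li$ cannot send the \orsl\ $C(\GF(q))$ onto an \orsl\ of $\li$. The framework of the forward direction, together with the regular action of the Singer group $I$ on the inscribed bundle of $(\pi,\li)$-special-dual conics from Theorem~\ref{propthm}, should make this accessible via a careful count of how a $2$-to-$1$ cover interacts with the $\GF(q)$-subline structure.
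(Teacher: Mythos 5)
The paper offers no proof of this lemma at all: it is stated as a generalisation of the dual of \cite[Corollary 19]{lavr10}, so your attempt is being measured against that reference rather than against an argument in the text. Your forward direction contains the right idea: because $\li$ belongs to the extended dual conic $\tilde\Gamma$, the map $m\mapsto m\cap\li$ restricted to $\tilde\Gamma$ is a projectivity between copies of $\PG(1,q^3)$ (the dual of Steiner's parametrisation of a conic by the pencil through one of its points), and it carries the \orsl\ $\Gamma\subset\tilde\Gamma$ to an \orsl\ of $\li$. However, your realisation of this via the envelope conic $C$ and its tangent lines is valid only in odd characteristic: for $q$ even the tangents of a conic are concurrent at the nucleus, so a non-degenerate dual conic of $\PG(2,q^3)$ is not the tangent envelope of any conic and the map $P\mapsto T_PC\cap\li$ does not exist. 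You flag characteristic $2$ only in the converse, but it already breaks your forward direction; the fix is to discard $C$ and work with $\tilde\Gamma$ directly as a conic of the dual plane.

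The converse has two genuine gaps. First, for $q$ odd you reduce (via Segre) to showing that when $\li\notin\tilde\Gamma$ the generically $2$-to-$1$ map $m\mapsto m\cap\li$ cannot carry $\Gamma$ onto an \orsl\ of $\li$; you explicitly defer this rigidity step, and it is the entire content of the converse --- neither Theorem~\ref{propthm} nor the inscribed-bundle structure hands it to you, so as written the proof is incomplete. Second, your argument for $q$ even is circular: you propose to invoke the classification of the \orsls\ of $\ES$ into $\pi$-\psline s and $\pi$-\dcsline s, but that classification is Theorem~\ref{defn:sline}, which the paper derives \emph{from} the present lemma. A non-circular version of that idea exists and would handle all $q$ at once: the $q^2+q+1$ pencils and the $q^2+q+1$ $(\pi,\li)$-special-dual conics yield, by the forward direction and its (easy) pencil analogue, $2(q^2+q+1)$ pairwise distinct \orsls\ of $\ES$ (distinct because each point of $\ES$ lies on a unique line of $\pi$, so a subline determines its preimage in $\pi$); by the subline count for scattered linear sets of rank $3$ in \cite{lavr10} these are all the \orsls\ of $\ES$, and since a dual $(q+1)$-arc is not a pencil, the preimage of the subline cut out by $\Gamma$ forces $\Gamma$ to be a special-dual conic. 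That route needs neither Segre's theorem nor the degree-$2$ analysis, but it must import the count from \cite{lavr10} rather than from Theorem~\ref{defn:sline}.
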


Using this lemma, it is straightforward to prove the following
geometric interpretation of the \orsls\ of $\ES$ in relation to an associated \orsp. %

\begin{theorem}\Label{defn:sline}
 Let $\pi$ be an exterior \orsp\ of $\PG(2,q^3)$, $q>2$, with exterior splash $\ES$ on $\li$.  The $2(q^2+q+1)$ \orsls\ of $\ES$ lie  in two families of size $q^2+q+1$ as follows.
\begin{enumerate}[noitemsep,nolistsep]
\item If $A$ is a point of $\pi$, then the pencil of $q+1$ lines of $\pi$ through $A$ meets $\li$ in an \orsl\ of $\ES$, called a {\em $\pi$-\psline.} 
\item If $\Gamma$ is a $(\pi,\li)$-special-dual conic of $\pi$, then the lines of $\Gamma$ meet $\li$ in an \orsl\ of $\ES$, called a {\em $\pi$-\dcsline}. 
\end{enumerate}
\end{theorem}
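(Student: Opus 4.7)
The plan is to exhibit the two claimed families, verify that each contains $q^2+q+1$ distinct \orsls, rule out overlap between them, and then invoke the known enumeration of \orsls\ in a scattered linear set of rank $3$ to conclude that the list is complete. For the pencil family, first fix $A\in\pi$. Since $A\notin\li$, the map sending a line through $A$ to its intersection with $\li$ is a homography between the pencil of $q^3+1$ lines at $A$ in $\PG(2,q^3)$ and the projective line $\li$. The $q+1$ lines of $\pi$ through $A$ form an order-$q$-subpencil (because $\pi$ is an \orsp), so their intersections with $\li$ form an \orsl, and these intersections lie in $\ES$ by the definition of the exterior splash. The dual-conic family is then immediate from Lemma~\ref{secondlines}.

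Next I would count within and between families. The key observation is that two distinct lines of $\pi$, extended to $\PG(2,q^3)$, meet $\li$ in distinct points, because their unique intersection lies in $\pi$ and $\pi\cap\li=\emptyset$; consequently a subline of $\ES$ arising from $q+1$ lines of $\pi$ determines that set of $q+1$ lines uniquely. Two pencils based at distinct points $A,A'\in\pi$ share only the line $AA'$, so the two corresponding sublines share exactly one point on $\li$; since $q>2$ each subline has $q+1\ge 4$ points, so they are distinct, yielding $q^2+q+1$ pencil-sublines. Using that the $(\pi,\li)$-special-dual conics form an inscribed bundle of size $q^2+q+1$ in which any two share exactly one line (Section~\ref{sec:singer-gp}), the same argument gives $q^2+q+1$ distinct dual-conic-sublines. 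A pencil-subline cannot coincide with a dual-conic-subline, since a pencil consists of $q+1$ concurrent lines of $\pi$ while an irreducible dual conic of $\pi$ (with $q>2$) contains no three concurrent lines.

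For the completeness of the list, I would invoke Theorem~\ref{lin-set} together with \cite{lavr10}, which gives that a scattered linear set of rank $3$ contains exactly $2(q^2+q+1)$ \orsls; the $2(q^2+q+1)$ distinct sublines already exhibited must therefore exhaust all of them. The main obstacle is precisely this last completeness step, which relies on the external enumeration from \cite{lavr10}; the two constructions and all the distinctness arguments themselves are clean, hinging only on the simple fact that two distinct lines of $\pi$ meet $\li$ in distinct points, together with the pencil/inscribed-bundle structure used to count intersections inside each family.
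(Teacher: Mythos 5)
Your proof is correct and follows essentially the route the paper intends: the pencil family via the projectivity from the pencil of lines at $A$ onto $\li$, the dual-conic family via Lemma~\ref{secondlines}, and completeness via the projective equivalence with scattered linear sets of rank 3 and the known count of $2(q^2+q+1)$ sublines from \cite{lavr10}. The paper leaves these details implicit, calling the deduction straightforward, and your distinctness arguments (resting on the fact that each point of $\ES$ lies on a unique line of $\pi$) supply exactly the routine steps being omitted.
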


%By \cite[Corollary 15]{lavr10},
%through two points $L,M$ of an exterior splash $\ES$, there are exactly two \orsls\ contained in $\ES$. We now give a geometric construction of these two \orsls. Let $\pi$ be an exterior \orsp\ with exterior splash $\ES$, and let $\ell,m$ be the unique lines of $\pi$ through points $L,M$ of $\ES$. The $q+1$ lines in $\pi$ through the point $P=\ell\cap m$ meet $\ES$ in a $\pi$-\psline\ containing $L,M$. 
%Consider the five lines $\ell,m$, and $\li$ and its two conjugates with respect to $\pi$, they determine a unique $(\pi,\li)$-special-dual conic $\Gamma$. By Lemma~\ref{secondlines}, the lines of $\Gamma$ meet $\ES$ in a $\pi$-\dcsline\ containing $L,M$. 
%Note that these two \orsls\ are distinct as  a $(\pi,\li)$-special-dual conic in $\pi$ has at most two lines in common with the lines through a point $P$ in $\pi$.  Hence we have constructed the two unique \orsls\ through $L$ and $M$. 

%It is useful to note that while two \orsls\ in different families can meet in 0, 1 or 2 points, two distinct \orsls\ in the same family meet in exactly one point.

\begin{remark}{\rm
The $\pi$-\dcsline s of $\ES$ arise from an inscribed bundle of conics (see Section~\ref{sec:singer-gp}). We note that in  \cite[Remark 20]{lavr10}, the authors show that irregular sublines of a scattered linear set of rank 3 arise from a circumscribed bundle of conics.
}\end{remark}

\subsection{Sublines of a linear set and an exterior splash}\Label{sec:orsl-lin-ext}

In this section we compare the construction of the two families of \orsls\  from \cite{lavr10} with that of Theorem~\ref{defn:sline}. In particular, we explain why the notions of regular and irregular \orsls\ of a linear set defined in \cite{lavr10} are not intrinsic properties of the linear set, but that the two classes can be interchanged.

%In \cite{lavr10}, it is shown that the $2(q^2+q+1)$ \orsls\ contained in a scattered linear set $\ES$ of rank 3 are divided into two families, regular and irregular \orsls. These families are defined with respect to a plane in the Bruck-Bose representation of $\li\cong\PG(1,q^3)$ in $\PG(5,q)$, see \cite[Section 4]{lavr10} for details. 
%Recall that by \cite{luna04},  a linear set $\ES$ can be constructed by projecting an \orsp\ $\alpha$ onto $\li$ from a point $P\notin\alpha$. 
%In \cite[Section 4]{lavr10}, a geometric interpretation of regular and irregular sublines is given in terms of such an \orsp\ $\alpha$. The \emph{regular} sublines of $\ES$ are obtained by projecting the lines of $\alpha$ onto $\li$.  The \emph{irregular} sublines of $\ES$ are obtained by projecting a conic $\C$ of $\alpha$ onto $\li$ from a point $P$ that lies on the extension of $\C$ to $\PG(2,q^3)$ (these  are
%$(\alpha,PP^q)$-special conics of $\alpha$).

%We compare this geometric construction with the geometric construction of \orsls\ of an exterior splash $\ES$ arising from the exterior \orsp\  $\pi$. Namely, the \orsls\  of $\ES$ are 
% obtained  from dual lines ($\pi$-\psline s) and  $(\pi,\li)$-special-dual conics ($\pi$-\dcsline s). It would be easy then to conclude that this duality completely describes the \orsl s.  However, for exterior splashes, it transpires that being of one type or the other depends on the exterior \orsp\ under consideration. 

We first prove that the characterisation of \orsls\ given in Theorem~\ref{defn:sline}  is a property of the associated \orsp, not a property of the exterior splash. 

\begin{theorem}\Label{orsls-same}
Consider an exterior splash $\ES$ of an exterior \orsp\ $\pi$ onto $\li$.  Let $\X$ denote the $\pi$-\psline s of $\ES$, and let $\Y$ denote the $\pi$-\dcsline s of $\ES$.  Then there exists an exterior  \orsp\ $\pi'$ with the same exterior splash $\ES$ on $\li$, such that  $\X$ contains the  $\pi'$-\dcsline s of $\ES$ and $\Y$ contains the  $\pi'$-\psline s of $\ES$.
\end{theorem}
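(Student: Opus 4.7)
By Theorem~\ref{transsplashes}, it suffices to prove the claim in the setting of Example~\ref{splash-eg}, where $\pi = \PG(2,q)$ and $\li = \elltau$, with splash $\ES = \{E + \theta E^q : \theta^{q^2+q+1} = -1\}$, carriers $E_1 = E = (1,\tau,\tau^2)$ and $E_2 = E^q$, fixed triangle $E_1 E_2 E_3$, and Singer group $I = \langle \phi \rangle$ of order $q^2+q+1$. My plan is to construct $\pi' = \sigma(\pi)$ as the image of $\pi$ under a suitably chosen involutory collineation $\sigma$ of $\PG(2,q^3)$, so that $\pi'$ retains the splash $\ES$ while interchanging the roles of the two subline families.

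The crucial step is to produce $\sigma$ satisfying: (i) $\sigma$ fixes $\li$ setwise; (ii) $\sigma(\ES)=\ES$; (iii) $\sigma$ normalises $I$; and (iv) the induced action of $\sigma$ on the sublines of $\li$ contained in $\ES$ interchanges the two families $\X$ and $\Y$. A natural candidate is an involutory extension to $\PG(2,q^3)$ of the collineation $\Delta$ of Lemma~\ref{spl-gp-eg}, which on $\li$ swaps the carriers $E_1 \leftrightarrow E_2$; writing such an extension as a monomial matrix in the basis $\{E_1,E_2,E_3\}$ makes conditions (i)--(iii) transparent, and should no $\sigma\in\PGL(3,q^3)$ realise (iv), a correlation of $\PG(2,q^3)$ of similar form would induce the required involution on subline families. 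Granted such a $\sigma$, conditions (i) and (ii) force $\pi'$ to be an \orsp\ exterior to $\li$ with splash $\sigma(\ES)=\ES$, and because any projective map takes \psline s of $\pi$ to \psline s of $\pi'=\sigma(\pi)$ and \dcsline s to \dcsline s, condition (iv) yields
\[
\{\pi'\text{-\psline s}\} = \sigma(\X) = \Y, \qquad \{\pi'\text{-\dcsline s}\} = \sigma(\Y) = \X,
\]
which is the desired equality.

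The principal obstacle is establishing condition (iv) for our chosen $\sigma$. Using the regular $I$-actions on both $\X$ and $\Y$ --- arising from the regular action of $I$ on the points of $\pi$ and on the inscribed bundle of $(\pi,\li)$-special-dual conics --- together with the assumption that $\sigma$ normalises $I$, it suffices to verify that $\sigma$ maps one specific $\pi$-\psline\ to some $\pi$-\dcsline. Concretely, pick the pencil of $\pi$ through a chosen point $A$, compute its $q+1$ splash points on $\li$, apply $\sigma$, and verify that the $q+1$ lines of $\pi$ through the image splash points form a $(\pi,\li)$-special-dual conic, that is, a dual conic of $\pi$ whose extension to $\PG(2,q^3)$ contains the three conjugate lines $\ell, m, n$. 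This reduces the theorem to a finite coordinate calculation in the coordinates of Example~\ref{splash-eg}; once this single-subline check is performed, the $I$-symmetry propagates the result to all sublines, completing the proof.
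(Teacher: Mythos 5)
Your overall strategy is the same as the paper's: reduce to the model splash of Example~\ref{splash-eg} via Theorem~\ref{transsplashes}, extend the involution $\Delta$ of Lemma~\ref{spl-gp-eg} to an involutory homography $\sigma$ of $\PG(2,q^3)$ fixing $\li$ and $\ES$, set $\pi'=\sigma(\pi)$, and use the fact that homographies carry pencils to pencils and special-dual conics to special-dual conics. That frame is sound. But the one fact that carries all the content of the theorem --- that $\sigma$ actually \emph{interchanges} $\X$ and $\Y$ rather than preserving each family --- is never established; you defer it to ``a finite coordinate calculation'' that is not performed. A priori $\sigma$ could preserve both families, in which case $\pi'$ would be useless. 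The paper closes exactly this point by putting $\ES$ in the form $\{(x,x^q,0)\st x\in\GF(q^3)\}$ and writing both families explicitly, as $x_P=\{(l+m\tau+n\tau^2,(l+m\tau+n\tau^2)^q,0)\}$ and $y_P=\{((l+m\tau+n\tau^2)^q,l+m\tau+n\tau^2,0)\}$, whereupon the coordinate swap visibly sends $x_P$ to $y_P$; some such verification is indispensable.

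Moreover, your proposed shortcut to this verification rests on a false premise. Condition~(iii), that the monomial involution normalises $I$ inside $\PGL(3,q^3)$, fails: in the eigenbasis $\{E_1,E_2,E_3\}$ the elements of $I$ are projectively $\mathrm{diag}(\tau^{i},\tau^{iq},\tau^{iq^2})$, and conjugating by the swap of the first two coordinates gives $\mathrm{diag}(\tau^{iq},\tau^{i},\tau^{iq^2})$, which is projectively of the form $\mathrm{diag}(\tau^{j},\tau^{jq},\tau^{jq^2})$ only when $j\equiv -i$ and $(q^2+q+1)\mid i(2q+1)$; since $\gcd(2q+1,q^2+q+1)$ divides $3$, this fails for almost all $i$. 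So $\sigma I\sigma^{-1}\ne I$, and condition~(iii) is not ``transparent'' but wrong. The repair is to work on $\li$ only: the map induced by $\sigma$ on $\li$ is $\Delta$, which does normalise $\langle\Gamma\rangle=I|_{\li}$ (it inverts $\Gamma$, and $\langle\Gamma\rangle$ has index $2$ in the stabiliser of $\ES$ from Lemma~\ref{spl-gp-eg}); as $\X$ and $\Y$ are the two $\langle\Gamma\rangle$-orbits on the sublines of $\ES$, $\Delta$ either fixes both or swaps them, and one explicit subline then decides which --- this single check is still owed. Finally, drop the correlation fallback: a correlation sends points of $\li$ to lines of a pencil, so it induces no permutation of the sublines of $\ES$ and cannot yield the subplane $\pi'$.
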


\begin{proof} By Theorem~\ref{transsplashes}, all exterior splashes are projectively equivalent, so we can without loss of generality look at the exterior splash $\ES$ of  Example~\ref{splash-eg}.
Now $\ES$ contains the points $X_i=E+\theta_iE^q$, so points in $\ES$ have coordinates equivalent to $(\theta_i,1)\equiv(
l+m\tau+n\tau^{2},-(l+m\tau+n\tau^2)^q)$ where $l,m,n\in\GF(q)$, not all zero.  We can map this to the point $(l+m\tau+n\tau^{2},(l+m\tau+n\tau^2)^q,0)$, so, we can without loss of generality let $\ES=\{(x,x^q,0)\st x \in \GF(q^3)\}$.  (We note that this is equivalent to the canonical form  for a linear set of pseudoregulus type given in \cite{LMPT14}.)
 It is straightforward to calculate the \orsls\ of $\ES$, they are
$\X=\{x_P\st P\in\PG(2,q)\}$,
$\Y=\{y_P\st P\in\PG(2,q)\}$,
where 
\begin{eqnarray*}
 x_P&=&\{ \left(\,l+m\tau+n\tau^{2},(l+m\tau+n\tau^2)^q,0\right)\st  l,m,n\in\GF(q),\ P\cdot[l,m,n]=0\}\\
y_P&=&\{\left((l+m\tau+n\tau^2)^q, l+m\tau+n\tau^{2},0\right)\st  l,m,n\in\GF(q),\ P\cdot[l,m,n]=0\}.
\end{eqnarray*}
 Let $\pi$ be an exterior \orsp\ with exterior splash $\ES$. 
 Consider the involutary homography $\Delta'$ of $\PG(2,q^3)$ with matrix
\[
\begin{pmatrix}
0&1&0\\
1&0&0\\
0&0&1
\end{pmatrix},
\]
 it fixes $\li$. As the homography $\Delta$ of $\li$ given in Lemma~\ref{spl-gp-eg}  exchanges the two families $\X$, $\Y$, of \orsl s of $\ES$, $\Delta'$ exchanges the two sets of \orsl s in $\ES$, that is, $\Delta'$ maps $\X$ to $\Y$ and $\Y$ to $\X$. Thus $\Delta'$ does
  not fix $\pi$, as  only elements of the Singer group $I=\PGL(3,q^3)_{\pi,\li}$ fix $\pi$ and $\li$, and $I$ acts regularly on the elements of $\X$ by Theorem~\ref{propthm}.  Thus $\Delta'$ fixes $\ES$ and maps $\pi$ to another exterior \orsp\ $\pi'$. Hence $\Delta'$ maps the 
 $\pi$-\psline s to the  $\pi'$-\psline s.  Thus if $\X$ contains the $\pi$-\psline s, then $\Y$ contains the $\pi'$-\psline s, as required.
 \end{proof}

Note that there is an analogous argument for scattered linear sets.  Suppose that the exterior splash/scattered linear set $\ES$ in the above proof is the projection of an \orsp\ $\alpha$ from a point $P$ onto $\li$, then under the homography $\Delta'$, we obtain the same linear set $\ES$ as the projection of an \orsp\ $\alpha'$ from a point $P'$, and the \orsl s which are regular with respect to  $\alpha$ are now irregular with respect to  $\alpha'$. That is, the notion of regular and irregular sublines in a fixed scattered linear set $\ES$ of rank 3
can be interchanged by considering a different 
associated \orsp\ that projects the linear set $\ES$.

\section{Projection of \orsp s}\Label{sec:project}

In this section we look further at the relationship between exterior splashes and linear sets. Lunardon and Polverino \cite{luna04} showed that  a $\GF(q)$-linear set of rank 3 of $\PG(1,q^3)$ can be constructed by projecting an \orsp\ $\alpha$ of $\PG(2,q^3)$ from a point $P\notin\alpha$ onto $\li\cong\PG(1,q^3)$. 
We compare this with our definition of the splash of an \orsp\ $\pi$ onto $\li$, namely the intersection of the lines of $\pi$ with $\li$. 
If $\pi$ is exterior to $\li$, then the exterior splash of $\pi$ is equivalent to a  $\GF(q)$-linear set of rank 3 and size $q^2+q+1$. If $\pi$ is tangent to $\li$, then \cite[Theorem 7.1]{BJ-tgt1} shows that the tangent splash of $\pi$ is equivalent to a  $\GF(q)$-linear set of rank 3 and size $q^2+1$. 
This leads to the question: given an \orsp\ $\pi$, can the splash of $\pi$ on $\li$ be the same set of points as the projection of $\pi$ onto $\li$ from some point? 
We consider the case when $\pi$ is exterior to $\li$ in Section~\ref{sec:proj-ext}, and 
 consider the case when $\pi$ is tangent to $\li$ in Section~\ref{sec:proj-tgt}.

\subsection{Projection and exterior splashes}
\Label{sec:proj-ext}

In this section we  explain under what circumstances the exterior splash of an exterior \orsp\ $\pi$ is equal to the projection of $\pi$ onto $\li$. We also discuss  a conjecture regarding which exterior splashes of $\li$ can be obtained by projecting  a fixed exterior plane \orsp\ onto $\li$. 

We begin by showing that the Singer cycle of Theorem~\ref{propthm} acts regularly on the special conics, and special-dual conics of an \orsp.

\begin{theorem}\Label{cgreg}
Let $\pi$ be an \orsp\ of $\PG(2,q^3)$ exterior to $\li$.
The group $I=\PGL(3,q^3)_{\pi,\li}$ acts regularly on the set of $(\pi,\li)$-special conics of $\pi$, and acts regularly on the $(\pi,\li)$-special-dual conics of $\pi$.  
\end{theorem}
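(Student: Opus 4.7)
The plan is to leverage the incidence-geometric isomorphism pointed out after Theorem~\ref{propthm}: the structure with points the points of $\pi$ and ``lines'' the $(\pi,\li)$-special conics of $\pi$ (respectively the $(\pi,\li)$-special-dual conics) is isomorphic to $\PG(2,q)$. If $I$ acts as a collineation of this $\PG(2,q)$, and if that action is a Singer cycle, then regularity on the ``lines'' (i.e.\ on the special conics) is automatic.

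First I would check that $I$ stabilises the set of $(\pi,\li)$-special conics. Since $I\leq \PGL(3,q^3)_{\pi,\li}$, every $\phi\in I$ fixes $\pi$ setwise and, by part~3 of Theorem~\ref{propthm}, fixes each of the three conjugate points $E_1,E_2,E_3$ individually. So if $\C$ is a conic of $\pi$ whose $\PG(2,q^3)$-extension contains $\{E_1,E_2,E_3\}$, then $\phi(\C)$ is again a conic of $\pi$ whose extension contains these three points, i.e.\ another $(\pi,\li)$-special conic. Since incidence with points of $\pi$ is preserved by $\phi$, the restriction map $I\to \mathrm{Aut}(\PG(2,q))$ given by the identification above is a well-defined group homomorphism.

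Next I would use regularity on points. By part~1 of Theorem~\ref{propthm}, $I$ acts regularly on the $q^2+q+1$ points of $\pi$. In particular, the induced action on the $\PG(2,q)$ built from points of $\pi$ and special conics is faithful and point-regular. Hence the image of $I$ inside $\PGL(3,q)$ is a cyclic subgroup of order $q^2+q+1$ acting transitively on points; this image is therefore a Singer cycle. Since a Singer cycle of $\PG(2,q)$ acts regularly on both the points and the lines, $I$ acts regularly on the set of $(\pi,\li)$-special conics.

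The special-dual case is formally dual: $I$ fixes the three conjugate lines $\ell,m,n$ by part~2 of Theorem~\ref{propthm}, so $I$ stabilises the set of $(\pi,\li)$-special-dual conics; the incidence structure whose ``lines'' are the special-dual conics is again $\PG(2,q)$, $I$ acts regularly on the lines of $\pi$ (which here play the role of ``points'' in the dual incidence structure), and the same Singer-cycle argument yields regularity. The only thing needing care is to confirm that the homomorphism from $I$ into the collineation group of the relevant $\PG(2,q)$ is injective; this is the main obstacle, but it follows immediately from the point-regular (hence faithful) action on the points of $\pi$ in the first case, and on the lines of $\pi$ in the dual case.
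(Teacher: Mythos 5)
Your argument is correct, and it reaches the conclusion by a genuinely different key step than the paper. Both proofs begin the same way: $I$ fixes $E_1,E_2,E_3$ (respectively $\ell,m,n$), hence permutes the $(\pi,\li)$-special conics (respectively special-dual conics) and so induces a group of collineations of the derived plane $\P\cong\PG(2,q)$ on which it is point-regular by part 1 of Theorem~\ref{propthm}. From there the paper stays elementary and self-contained: it takes the orbit of one special conic, counts flags $(Q,\D)$ with $Q$ a point of $\pi$ on a conic $\D$ of the orbit to get $(q^2+q+1)n=|\mbox{orbit}|\,(q+1)$, and uses $\gcd(q^2+q+1,q+1)=1$ together with the projective-plane bound $n\le q+1$ to force the orbit to have size $q^2+q+1=|I|$, whence regularity. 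You instead identify the faithful point-regular image of $I$ as a Singer cycle and invoke the classical fact that a Singer cycle of a finite projective plane is regular on lines as well as points. That fact is standard, but since it is the entire content of the step, a careful write-up should at least indicate its proof (e.g.\ the incidence-matrix argument showing that any collineation of a finite projective plane fixes equally many points and lines, so a nontrivial element of a point-regular group fixes no line, giving a semi-regular and hence regular action on the $q^2+q+1$ lines). With that citation or one-line justification supplied, your proof is complete; it trades the paper's self-contained counting for an appeal to a well-known theorem, and has the mild advantage of working verbatim for any finite projective plane of order $q$, not just the Desarguesian one. Your treatment of the dual case and of faithfulness is also correct and mirrors what the paper leaves implicit.
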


\begin{proof}
By Theorem~\ref{propthm}, $I$ fixes $\pi$ and hence acts on the projective plane $\P$ with points  the points of $\pi$, and lines the set $\mathscr C$ of $(\pi,\li)$-special conics of $\pi$.  Consider the orbit $\mathscr C^I$ of $\mathscr C$ under $I$.  As $I$ is transitive on the points of $\pi$ there is a constant number $n$ of elements of $\mathscr C^I$ through each point of $\pi$.   Count the pairs $(Q,\D)$ where $Q$ is a point of $\pi$ on a $(\pi,\li)$-special conic $\D\in \mathscr C^I$.  We have
\[
(\r^2+\r+1)\times n=|\mathscr C^I|\times (\r+1).
\]
As $\r^2+\r+1$ and $\r+1$ have no common factors, it follows that $\r+1$ divides $n$. However, as $\P$ is a projective plane, $n\le \r+1$. Hence  $n=\r+1$ and so $|\mathscr C^I|=\r^2+\r+1$.
Similarly, $I$ acts regularly on the $(\pi,\li)$-special-dual conics of $\pi$.  
\end{proof}

\begin{theorem} \Label{proj-spl}
Let $\pi$ be an exterior \orsp\ with exterior splash $\ES$ on $\li$,  carriers $E_1,E_2$, and third conjugate point $E_3$. Let $P$ be a point of $\PG(2,q^3)\setminus\pi$, then the projection of $\pi$ from $P$ onto $\li$ is equal to $\ES$ if and only if $q$ is even, and $P=E_3$. 
\end{theorem}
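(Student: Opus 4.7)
By Theorem~\ref{transsplashes} we may work in the setting of Example~\ref{splash-eg}, so $\pi=\PG(2,q)$, $\ell=\elltau$, the carriers are $E_1=E=(1,\tau,\tau^2)$ and $E_2=E^q$, the third conjugate point is $E_3=E^{q^2}$, and $\ES=\{E+\theta E^q:\theta^{q^2+q+1}=-1\}$. Since $E,E^q,E^{q^2}$ are linearly independent over $\GF(q^3)$ by a Vandermonde argument, they form a basis; a point $X\in\pi$ has the form $X=e^qE+e^{q^2}E^q+eE^{q^2}$ for some $e\in\GF(q^3)\setminus\{0\}$, unique up to $\GF(q)^*$-scaling, while a general point satisfies $P=p_1E+p_2E^q+p_3E^{q^2}$. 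If $p_3=0$ then $P\in\ell$ and every line $PX$ meets $\ell$ only in $P$, so the projection is a single point; hence $p_3\neq 0$ and we may normalise $p_3=1$.

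Setting the $E^{q^2}$-coefficient of $\lambda P+\mu X$ to zero gives the projection
\[ PX\cap\ell=(e^q-ep_1)E+(e^{q^2}-ep_2)E^q, \]
so the affine parameter on $\ell$ is $\theta(e)=(e^{q^2}-ep_2)/(e^q-ep_1)$. For the forward direction, take $P=E_3$: then $p_1=p_2=0$, $\theta(e)=e^{q^2-q}$, and $\theta(e)^{q^2+q+1}=e^{q(q^3-1)}=1$, which equals $-1$ precisely when $q$ is even. The map $e\mapsto e^{q^2-q}$ is a bijection from $\GF(q^3)^*/\GF(q)^*$ onto the norm-$1$ subgroup of $\GF(q^3)^*$ (since $\gcd(q,q^2+q+1)=1$), and in characteristic $2$ this subgroup coincides with $\ES$; moreover $E_3$ lies on no line of $\pi$ (else $E,E^q,E^{q^2}$ would be collinear over $\GF(q^3)$), so the projection is injective and equals the splash.

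For the converse, assume the projection equals $\ES$: then $\theta(e)^{q^2+q+1}=-1$ for every $e\in\GF(q^3)^*$ with $e^q-ep_1\neq 0$, and no $X$ may project to the carriers $E_1,E_2\notin\ES$. Multiplicativity of the norm $N:\GF(q^3)\to\GF(q)$ rewrites this as the identity
\[ N(e^{q^2}-ep_2)+N(e^q-ep_1)=0 \]
as a function of $e\in\GF(q^3)$. Expanding each norm as a product of three Frobenius conjugates yields a sum of seven monomials of the form $c_{a,b,c}\,e^{a+bq+cq^2}$ with $a+b+c=3$; one checks directly that these seven exponents are pairwise distinct modulo $q^3-1$ for every $q\ge 2$, so each coefficient must vanish separately.

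Reading off coefficients, the monomial $e^{2+q}$ gives $p_2=p_1^{q+1}$, the monomial $e^{2+q^2}$ gives $p_1=p_2^{1+q^2}$, and the monomial $e^{1+q+q^2}$ gives $N(p_1)+N(p_2)=2$. Substituting the first relation into the second and using $p_1^{q^3}=p_1$ yields $p_1^{q^2+q+1}=1$ whenever $p_1\neq 0$, i.e.\ $N(p_1)=1$; but then writing $p_1=e^{q-1}$ for some $e\in\GF(q^3)^*$ gives $p_2=e^{q^2-1}$, so $P=\frac1e(e^qE+e^{q^2}E^q+eE^{q^2})\in\pi$, contradicting $P\notin\pi$. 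Hence $p_1=0$, forcing $p_2=0$, so $P=E_3$; the residual equation $N(p_1)+N(p_2)=0=2$ then holds only in characteristic $2$, that is, when $q$ is even. The main obstacle is the careful bookkeeping required in the norm expansion and the verification that the seven monomial exponents remain pairwise distinct modulo $q^3-1$ for the small values $q=2,3$.
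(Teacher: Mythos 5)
Your proof is correct, but it follows a genuinely different route from the paper's. The paper's argument splits into two halves: first it pins down $P=E_3$ by a group-theoretic argument — it invokes the fact (from \cite{lavr10}) that the \orsls\ of $\pi$ project into one family of sublines of $\ES$ while the lines joining $P$ to sublines of the other family cut $\pi$ in $(\pi,PP^q)$-special conics through $P,P^q,P^{q^2}$, and then uses the regular action of the Singer group $I$ on that family together with the fact that two irreducible conics share at most four points to force $P$ to be a fixed point of $I$, hence $P=\ell^q\cap\ell^{q^2}=E_3$; only the final ``iff $q$ even'' step is computational, and there it coincides in essence with yours ($\theta=f/g$ with $f^{q^2}=g$, so $\theta^{q^2+q+1}=1$). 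You instead work throughout in the eigenbasis $E,E^q,E^{q^2}$, encode the containment of the projection in $\ES$ as the polynomial identity $N(e^{q^2}-ep_2)+N(e^q-ep_1)=0$ on $\GF(q^3)$, and extract $p_2=p_1^{q+1}$, $p_1=p_2^{1+q^2}$ and $N(p_1)+N(p_2)=2$ by comparing the seven monomial coefficients; the Hilbert~90 step correctly disposes of the residual case $N(p_1)=1$ (which the coefficient equations alone do not exclude) by showing $P$ would then lie in $\pi$. Your approach is more self-contained — it does not rely on Theorem~\ref{defn:sline}, Theorem~\ref{cgreg} or the subline-projection results of \cite{lavr10} — at the cost of the coefficient bookkeeping and the check that the seven exponents stay distinct modulo $q^3-1$ (which you rightly flag, and which does hold, including at $q=2$ where the exponents exceed $q^3-1$). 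The paper's proof is less elementary but exposes the structural reason the answer is $E_3$, namely that the special conics pencil through the conjugate triple of the projection point.
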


\begin{proof} By Theorem~\ref{transsplashes}, we can without loss of generality prove this for the \orsp\ $\pi=\PG(2,q)$
with exterior splash $\ES$ onto the exterior line $\elltau$. 
 By Theorem~\ref{propthm}, the group $I=\PGL(3,q^3)_{\pi,\ell}$  fixing $\pi$ and $\ell$ is cyclic of order $q^2+q+1$; acts regularly on the points and lines of $\pi$; and by Theorem~\ref{cgreg} acts regularly on the $(\pi,\ell$)-special-dual conics of $\pi$.
By Theorem~\ref{defn:sline}, there are $2(q^2+q+1)$ \orsl s in $\ES$, divided into two families denoted ${\mathcal X}$ and ${\mathcal Y}$, one  family contains the 
$\pi$-\psline s of $\ES$, and  the other family contains the $\pi$-\dcsline s. So $I$ acts regularly on the \orsls\ of $\X$, and acts regularly on the \orsls\ of $\Y$. 

Let $P$ be a point of $\PG(2,q^3)$ not in $\pi$ or $\ell$, and let $\L_P$ be the projection of $\pi$ onto $\ell$ from $P$. Suppose that $\L_P=\ES$, that is, the projection of $\pi$ from $P$ is the same as the exterior splash of $\pi$ onto $\ell$. 
By \cite{lavr10}, the
$q^2+q+1$ \orsl s of $\pi$ are projected onto \orsl s of $\mathcal L_P=\ES$ that lie in the same family, ${\mathcal X}$ say.  Further, the line joining $P$ to an \orsl\ $d$ in the other family ${\mathcal Y}$ meets $\pi$ in a set of points which form a conic of $\pi$ whose extension to $\PG(2,q^3)$ contains $P$, and hence $P^q$ and $P^{q^2}$.  That is, it is a $(\pi,PP^q)$-special conic, and so the $q^2+q+1$ $(\pi,PP^q)$-special conics of  $\pi$ are projected to the $q^2+q+1$ \orsl s in ${\mathcal Y}$.

As the group $I$ acts regularly on the \orsls\ of $\Y$,  it induces a Singer cycle   on the $q^2+q+1$ $(\pi,PP^q)$-special conics of  $\pi$.  All these conics have $P,P^q,P^{q^2}$ in common.  If $P$ is not a fixed point of this Singer cycle, then the $q^2+q+1$ $(\pi,PP^q)$-special conics all have the images of $P$ in common, a contradiction as  two distinct irreducible conics have at most four points in common. Thus $P$ is a fixed point of the Singer cycle, and $P\notin\ell$, so  $P=\ell^q\cap\ell^{q^2}$. That is, if $\L_P=\ES$ then $P=\ell^q\cap\ell^{q^2}=(1,\tau^{q^2},\tau^{2q^2})$.

It remains to show that for the point $P=\ell^q\cap\ell^{q^2}$, $\L_P=\ES$ if and only if $q$ is even. 
By Example~\ref{splash-eg}, we can parameterise the points of $\ES$ as $E+\theta E^q$ for $\theta\in\GF(q^3)$, $\theta^{q^2+q+1}=-1$. 
Let $(x,y,z)$, $x,y,z\in\GF(q)$, be a point of $\pi$. The line joining this point to 
 $P=(r,s,t)$ has coordinates
$[sz-ty,\ tx-rz,\ ry-sx]$. It meets the line  $\elltau$ in the point $E+\theta E^q$ with parameter 
%
%\green{\begin{eqnarray*}
%A&=&\Bigl((tx-rz)+(ry-sx)(\tau+\tau^q),\\
%&&\quad\quad (ry-sx)\tau\tau^q-(sz-ty),\\
%&&\quad\quad
%-(sz-ty)(\tau+\tau^q)-(tx-rz)\tau\tau^q\Bigr)\\
%&=&\Bigl(
%x(t-s(\tau+\tau^q))+yr(\tau+\tau^q)-zr,\\
%&&\quad\quad -xs\tau\tau^q+y(t+r\tau\tau^q)-zs,\\
%&&\quad\quad -xt\tau\tau^q+yt(\tau+\tau^q)+z(r\tau\tau^q-s(\tau+\tau^q))\Bigr).
%\end{eqnarray*}}
%
\begin{eqnarray*}
\theta&=&
\frac{x(s\tau^2-t\tau)+y(t-r\tau^2)+z(r\tau-s)}
{x(t\tau^q-s\tau^{2q})+y(r\tau^{2q}-t)+z(s-r\tau^q)}\ =\ \frac{f(x,y,z)}{g(x,y,z)}.
\end{eqnarray*}
If $P=(r,s,t)=(1,\tau^{q^2},\tau^{2q^2})$ we see that $f^{q^2}=g$, and so $\theta^{q^2+q+1}=1$.  Now $\ES$ is the set of points with parameter $\theta$, where $\theta^{q^2+q+1}=-1$, hence the projection $\L_P$  from $P=(1,\tau^{q^2},\tau^{2q^2})$ is  the same as the  exterior splash $\ES$ of $\pi$  onto $\ell$ if and only if $q$ is even. \end{proof}

Suppose we fix an \orsp\ $\pi$ and an exterior line $\ell$. We consider the set $\mathscr S$ of all the projections of $\pi$ from a point $P\in\PG(2,q^3)\setminus\{\pi,\ell\}$ onto $\ell$. Note that the projections in $\mathscr S$ are either tangent or exterior splashes.   As there are less  than $q^6$ possible points to project $\pi$ from, and more than $q^6$ exterior splashes, we will not obtain all the possible exterior splashes on $\ell$. We further examine this
situation. 
As usual, let $I=\PGL(3,q^3)_{\pi,\ell}$ with fixed points $E_1,E_2,E_3$ and fixed lines $\ell,m,n$. Let $J$ be the subgroup of $\PGL(3,q^3)$ that fixes $E_1,E_2,E_3$. Then it is straightforward to show that $I$ is a normal subgroup of $J$, and that further $J$ has seven orbits, namely $E_1$; $E_2$; $E_3$; $\ell\setminus\{E_1,E_2\}$; $m\setminus\{E_1,E_3\}$; $n\setminus\{E_2,E_3\}$; and the points $\PG(2,q^3)\setminus\{\ell,m,n\}$. Moreover, the orbits of $I$ on $\ell$ are $E_1,E_2$ and $q-1$ exterior splashes, with similar orbits on $m$ and $n$. Finally the orbits of $I$ on $\PG(2,q^3)\setminus\{\ell,m,n\}$ consist of \orsps. We now look at the action of $I$ on the splashes in $\mathscr S$.
 
 \begin{lemma}
 \begin{enumerate}
\item  An exterior splash in $\mathscr S$ is either fixed under $I$, or has an orbit of size $q^2+q+1$ under $I$.
\item A  tangent splash  in $\mathscr S$ has an orbit of size $q^2+q+1$ under $I$.
\end{enumerate}
\end{lemma}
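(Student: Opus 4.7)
The plan is to use that $I$ is cyclic of order $q^2+q+1$, so the $I$-orbit of any $\L_P\in\mathscr S$ has size dividing $q^2+q+1$ and is determined by the stabiliser of $\L_P$ inside $I$. My approach would be to classify when a nontrivial element $\phi^j\in I$ can fix $\L_P$ as a set: for Part~1 this should force $\L_P$ to be $I$-invariant, so the orbit will be either a singleton or of size $q^2+q+1$; for Part~2 I would rule out any such $\phi^j$, forcing orbit size exactly $q^2+q+1$.

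For Part~1, suppose $\L_P$ is an exterior splash and $\phi^j\neq 1$ stabilises $\L_P$. By Result~\ref{carriers-belong-splash}(1), $\L_P$ has a unique pair of carriers $\{E'_1,E'_2\}$, which $\phi^j$ must permute. Since $|\phi^j|$ divides the odd integer $q^2+q+1$ it cannot act as a transposition, so $\phi^j$ fixes each carrier. Theorem~\ref{propthm} tells us the only points of $\ell$ fixed by a nontrivial power of $\phi$ are $E_1$ and $E_2$, giving $\{E'_1,E'_2\}=\{E_1,E_2\}$. By Result~\ref{carriers-belong-splash}(2) there are exactly $q-1$ disjoint exterior splashes with carriers $\{E_1,E_2\}$, collectively covering $(q-1)(q^2+q+1)=q^3-1$ points, which is all of $\ell\setminus\{E_1,E_2\}$. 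Since Theorem~\ref{propthm} also decomposes $\ell\setminus\{E_1,E_2\}$ into exactly $q-1$ $I$-orbits of size $q^2+q+1$, these $q-1$ orbits must coincide with the $q-1$ carrier-$\{E_1,E_2\}$ splashes; in particular $\L_P$ is itself an $I$-orbit and is $I$-invariant, so its orbit in $\mathscr S$ has size~$1$. Otherwise no nontrivial $\phi^j$ stabilises $\L_P$, and the orbit has size $q^2+q+1$.

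For Part~2, let $\L_P$ be a tangent splash, so that $P$ lies on a unique line $\ell_0$ of $\pi$ and $T=\ell_0\cap\ell$ is the common image under projection of the $q+1$ points of $\pi\cap\ell_0$. Since $\L_P$ is a $\GF(q)$-linear set of rank~$3$ and size $q^2+1$, it contains a unique point of weight~$2$, namely $T$; hence $T$ is intrinsic to the point set $\L_P$ and any $\phi^j$ stabilising $\L_P$ must fix $T$. If $\phi^j\neq 1$ then Theorem~\ref{propthm} forces $T\in\{E_1,E_2\}$. But $T$ lies on a line of $\pi$, so $T\in\ES$, whereas $E_1,E_2\notin\ES$: the $q-1$ disjoint splashes with carriers $\{E_1,E_2\}$ partition $\ell\setminus\{E_1,E_2\}$ and leave the carriers themselves outside every such splash, including $\ES$. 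This contradiction rules out any nontrivial stabiliser, giving orbit size exactly $q^2+q+1$.

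The main delicate step is Part~2: justifying that $T$ is an intrinsic invariant of the point set $\L_P$. I would appeal to the equivalence of tangent splashes with rank-$3$, size-$q^2+1$ linear sets (as noted in the introduction), together with the standard fact that in such a linear set the weight-$2$ point is unique. Once $T$ is known to be intrinsic, the fixed-point dichotomy of $I$ on $\ell$ closes Part~2 immediately; Part~1 is then driven entirely by the intrinsic carriers and Result~\ref{carriers-belong-splash}.
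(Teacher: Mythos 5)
Your Part~2 matches the paper's argument in substance: both hinge on the centre of a projected tangent splash being an intrinsic invariant of the point set, lying in $\ES$ (as it is the trace of a line of $\pi$ on $\li$) and therefore distinct from the fixed points $E_1,E_2$ of $I$, so that no non-identity element of $I$ can stabilise the tangent splash. Part~1, however, has a genuine gap at its last step. You correctly deduce that an exterior splash $\L_P\in\mathscr S$ with non-trivial stabiliser in $I$ must have carriers $\{E_1,E_2\}$, but you then conclude that the $q-1$ exterior splashes with these carriers coincide with the $q-1$ non-trivial $I$-orbits merely because both families partition $\ell\setminus\{E_1,E_2\}$ into $q-1$ parts of size $q^2+q+1$. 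Two partitions of a set into equally many parts of equal sizes need not be the same partition, and nothing in your counting shows that any one of these splashes is a union of $I$-orbits; so you have not actually shown that $\L_P$ is $I$-invariant, which is precisely what Part~1 asserts.

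The missing fact is true and easy to supply. By Lemma~\ref{spl-gp-eg} (transported by a projectivity), the setwise stabiliser in $\PGL(2,q^3)$ of an exterior splash with carriers $E_1,E_2$ contains a cyclic group of order $q^2+q+1$ fixing both carriers; the stabiliser in $\PGL(2,q^3)$ of the two points $E_1,E_2$ is cyclic of order $q^3-1$ and so has a unique subgroup of order $q^2+q+1$, namely the group induced by $I$ on $\ell$. Hence every exterior splash with carriers $E_1,E_2$ is fixed by $I$ and, since $I$ is semi-regular off $\{E_1,E_2\}$ and the splash has exactly $q^2+q+1$ points, it is a single $I$-orbit. (Equivalently, you could invoke the paper's remark immediately preceding the lemma that the orbits of $I$ on $\ell$ are $E_1$, $E_2$ and $q-1$ exterior splashes.) With this inserted, your carrier-first argument for Part~1 becomes a valid alternative to the paper's proof, which instead uses normality of $\langle g\rangle$ in the cyclic group $I$ to show that a non-identity $g\in I_{\L_P}$ is fixed-point-free on $\ell\setminus\{E_1,E_2\}$, and only then reads off the carriers and the $I$-invariance from the structure of the full stabiliser described in Lemma~\ref{spl-gp-eg}.
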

\begin{proof} For part 1, consider an exterior splash $\ES\in\mathscr S$ with an orbit under $I$ of size less than $|I|=q^2+q+1$, so $I_\ES$ is non-trivial. We show that in this case $\ES$ is fixed by $I$. Let $g\in I_\ES$, $g$ not the identity, so $g$ fixes $E_1$ and $E_2$.  As $I$ is cyclic, it is abelian, so $\langle g\rangle$ is a normal subgroup of $I$.  Thus  $\langle g\rangle$ fixes either all the points in an orbit of $I$, or has no fixed points on an orbit of $I$.  In the first case, $g$ fixes at least $(q^2+q+1)+2$ points of $\ell$, which is more points that an \orsl\ of $\ell$ contains, hence $g$ fixes $\ell$ pointwise, and so $g$ is the identity on $\ell$.  As $I$ is faithful on $\ell$, it follows that $g$ is the identity.  Thus the second case occurs.
From Lemma~\ref{spl-gp-eg} we can show that the full group (acting faithfully) on the splash is of size $2(q^2+q+1)$, consisting of $q^2+q+1$ involutions and the cyclic group $I$ of odd order, whose elements only fix the carriers of the splash.  As $g\in I_{\ES}\subset I$, it follows that $g$ has odd order and the only fixed points are the carriers of $\ES$.  This implies that $\ES$ has carriers $E_1,E_2$, and is fixed by $I$.  So an exterior splash in $\mathscr S$  has orbit of size 1 or  $|I|=q^2+q+1$.

For part 2, as the exterior splash of $\pi$ on $\ell$ does not contain $E_1$ or $E_2$, by definition no line of $\pi$ contains $E_1$ or $E_2$, thus the tangent splash obtained by projecting $\pi$ from a point on a line of $\pi$ does not have centre $E_1$ or $E_2$.  Thus the centre of the tangent splash is not a fixed point of $I$,  hence its orbit under $I$ is of size $|I|$, and hence the $q^2+q+1$ tangent splash projections  in $\mathscr S$  have different centres, and so are distinct.  
\end{proof}

\begin{lemma}
Let $\ES$ be an exterior splash in $\mathscr S$  with carriers $E_1,E_2$.  Then if $q$ is even, $\ES$ is the projection of $\pi$ from 1 point or $q^2+q+1$ points; and if $q$ is odd, 
$\ES$ is the projection of $\pi$ from $q^2+q+1$ or $q^2+q+2$ points.
\end{lemma}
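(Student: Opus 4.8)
The plan is to reduce to the canonical example and then count, for each exterior splash with carriers $E_1,E_2$, the centres from which $\pi$ projects onto it. By Theorem~\ref{transsplashes} and Example~\ref{splash-eg} I may take $\pi=\PG(2,q)$, $\ell=\elltau$, with $E_1=E=(1,\tau,\tau^2)$, $E_2=E^q$ and $E_3=E^{q^2}$. Writing points of $\ell$ as $E+\theta E^q$, the orbits of $I=\PGL(3,q^3)_{\pi,\ell}$ on $\ell\setminus\{E_1,E_2\}$ are the norm level sets $\ES_c=\{E+\theta E^q:N(\theta)=c\}$, $c\in\GF(q)^*$, and these are exactly the $q-1$ exterior splashes with carriers $E_1,E_2$; the splash of $\pi$ itself is $\ES_{-1}$. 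The given $\ES$ therefore equals some $\ES_c$, and I will compute the count for each. For a fixed $\ES_c$ the set of centres $P$ with $\L_P=\ES_c$ is fixed by $I$ (since $I$ fixes $\pi$, $\ell$ and each $\ES_c$), hence a union of $I$-orbits; as the only $I$-fixed valid centre is $E_3$ (Theorem~\ref{propthm}), the count has the form $a(\ES_c)(q^2+q+1)+\epsilon(\ES_c)$ with $\epsilon\in\{0,1\}$, and $\epsilon=1$ exactly when $\ES_c=\L_{E_3}$.

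First I would identify $\L_{E_3}$. Repeating the parameter computation of Theorem~\ref{proj-spl} with $P=E_3=(1,\tau^{q^2},\tau^{2q^2})$ gives $f^{q^2}=g$, so $N(\theta)=N(f)/N(g)=1$ along $\pi$; thus $\L_{E_3}=\ES_1$. Hence $\epsilon(\ES_c)=1$ iff $c=1$, and $\ES_1$ coincides with the splash $\ES_{-1}$ of $\pi$ precisely when $q$ is even.

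The heart of the argument is to determine $a(\ES_c)$. Writing the projection parameter as $\theta=-\det(E^q,P,X)/\det(E,P,X)$ (the $f/g$ of the proof of Theorem~\ref{proj-spl}), the projection $\L_P$ is an $I$-invariant splash iff $N(\theta)$ is constant on $\pi$, i.e. iff the cubic forms $N(f)=ff^{q}f^{q^2}$ and $N(g)$ are proportional. Comparing their linear factors (the lines $PE^q,P^qE^{q^2},P^{q^2}E$ against $PE,P^qE^q,P^{q^2}E^{q^2}$) shows that, for $P$ on no line of $\pi$, this occurs exactly when $P$ lies on the curve $\mathcal C:\det(P,P^q,E^q)=0$. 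One checks that $\mathcal C$ is $I$-invariant, that in the basis $(E,E^q,E^{q^2})$ it has equation $ab^q=c^{q+1}$ with $|\mathcal C|=q^3+1$, and that $\mathcal C=\pi\sqcup\{E_1,E_2\}\sqcup(\text{valid centres})$, the last part forming $q-2$ full $I$-orbits. For a valid centre the constant value is $c(P)=(-1)^{q^2+q+1}N(a)/N(c)$; since $N(a)/N(c)=1$ holds exactly on $\pi$, and a direct count shows each value of $\GF(q)^*\setminus\{1\}$ is realised by exactly $q^2+q+1$ centres (necessarily a single orbit), the $q-2$ orbits map bijectively onto $\{\ES_c:c\neq s\}$, where $s=(-1)^{q^2+q+1}$ and $\ES_s=\ES_{-1}$. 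Thus $a(\ES_c)=1$ for every $c\neq-1$ and $a(\ES_{-1})=0$.

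Assembling $c(\ES_c)=a(\ES_c)(q^2+q+1)+[\,c=1\,]$ then splits by parity. When $q$ is even, $s=1$ and $\ES_1=\ES_{-1}$: the splash $\ES_{-1}$ is met only by $E_3$, giving count $1$, while every other $\ES_c$ is met by one orbit, giving $q^2+q+1$. When $q$ is odd, $s=-1$ and $\ES_{-1}=\ES_1$ fails: $\ES_{-1}$ receives no centre (so $\ES_{-1}\notin\mathscr S$), the splash $\ES_1=\L_{E_3}$ receives one orbit together with $E_3$, giving $q^2+q+2$, and every remaining $\ES_c$ receives one orbit, giving $q^2+q+1$. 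I expect the main obstacle to be the determinantal and norm bookkeeping of the key step: verifying that the proportionality locus is exactly $\mathcal C$, and that $\pi$ (which is not a valid centre) absorbs the norm value corresponding to $\ES_{-1}$, so that the valid orbits miss precisely the splash of $\pi$ while covering the remaining $q-2$ splashes once each.
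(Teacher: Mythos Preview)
Your approach is genuinely different from the paper's, and in fact aims at a stronger conclusion. The paper argues purely geometrically: any valid centre lies either in a full $I$-orbit of size $q^2+q+1$ or is the fixed point $E_3$; then, using the two subline families $\X,\Y$ of $\ES$ together with the uniqueness of the projection point of an \orsl\ (Lemma~\ref{proj-count}), it shows that at most \emph{one} full orbit can project $\pi$ onto a given $\ES$, so the count is $1$, $q^2+q+1$, or $q^2+q+2$. The parity split then comes from Theorem~\ref{proj-spl}. Your route instead parametrises the locus of centres directly via the curve $\mathcal C:\det(P,P^q,E^q)=0$, computes $|\mathcal C|=q^3+1$, strips off $\pi\cup\{E_1,E_2\}$, and reads off $q-2$ full orbits together with $E_3$, even identifying which $\ES_c$ each hits. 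If the bookkeeping goes through, you actually establish part 2(a) of the paper's Conjecture, not merely the lemma. What the paper's argument buys is that it avoids all coordinate computation and the $q\le 3$ polynomial-identity issue; what yours buys is the exact count for every $\ES_c$.

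There is one genuine wrinkle in your key step. When you match the linear factors of $N(f)$ and $N(g)$, the line $PE$ must coincide with one of $PE^q$, $P^qE^{q^2}$, $P^{q^2}E$. The first option forces $P\in\ell$ (excluded); the third gives $P,P^{q^2},E$ collinear, i.e.\ your curve $\mathcal C$; but the second option forces the common line to contain $E$ and $E^{q^2}$, hence to be $\ell^{q^2}$, and then $P,P^q\in\ell^{q^2}$ yields $P=E_3$. Since $\det(E^{q^2},E,E^q)\neq 0$, the point $E_3$ does \emph{not} lie on $\mathcal C$, so your sentence ``this occurs exactly when $P$ lies on the curve $\mathcal C$'' is literally false: the proportionality locus is $\mathcal C\cup\{E_3\}$. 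You do treat $E_3$ separately via the $\epsilon$ term, so the final count is unaffected, but the derivation should make this trichotomy explicit. Two smaller points: you should check (it is true) that no point of $\mathcal C\setminus\pi$ lies on an extended line of $\pi$, so that each such centre really yields an \emph{exterior} splash; and the step ``$N(\theta)$ constant on $\PG(2,q)$ $\Rightarrow$ $N(f)\propto N(g)$ as forms'' uses a polynomial-identity argument that needs $q>3$, so $q\in\{2,3\}$ require a separate (e.g.\ computer) check.
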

\begin{proof}  We can without loss of generality consider the \orsp\ $\pi=\PG(2,q)$ and the exterior line $\elltau$.  First note that the projection of $\pi$ from $E_3$ is an exterior  splash with carriers $E_1,E_2$. 
 As $\ES$ occurs as a projection of $\pi$, either it is from $E_3$ (this would give the 1 point or the $q^2+q+2$ points case), or it is from a point $P$ belonging to an orbit $\theta$ under $I$ of size $q^2+q+1$.  In this case, as $\ES$ and $\pi$ are both fixed by $I$, it follows that every point of the orbit $\theta$ projects $\pi$ on to the same splash $\ES$.  Now suppose there is another point $W$ which also projects $\pi$ onto $\ES$. Recall that the exterior splash $\ES$ has two families of \orsl s, $\X$ and $\Y$. An \orsl\  $m$ of $\pi$ is projected by $P$ onto an \orsl\ in one of these families,  $\X$ say.  Further, the images of $m$ under $I$ are projected onto \orsls\ in $\X$. Thus $W$ cannot project $m$ onto an \orsl\ of $\X$, since the point of projection of an \orsl\ is unique.  Thus $W$ projects $m$ onto an \orsl\ $y$ of the family $\Y$.  Similar to the proof of Theorem~\ref{proj-spl}, the lines joining $W$ to the points of $y$  meet $\pi$ in a set of points which form a conic of $\pi$ whose extension to $\PG(2,q^3)$ contains $W$, and hence $W^q$ and $W^{q^2}$. Then, similar to the proof of Theorem~\ref{proj-spl}, it follows that $W$ is a fixed point of the Singer cycle $I$, and so $W=\ell^q\cap\ell^{q^2}$. By Theorem~\ref{proj-spl}, if $q$ is odd, then there is at most one exterior splash with carriers $E_1,E_2$ with $q^2+q+2$ points of projection, and if $q$ is even, all the exterior splashes with carriers $E_1,E_2$  have at most $q^2+q+1$ points of projection.
\end{proof}

We conject that the following fully describes the projection of an \orsp\  onto an exterior line.

\begin{conjecture} Let $\pi$ be an \orsp\ in $\PG(2,q^3)$ with exterior line  $\ell$. Let $\mathscr S$ be the set of all the projections of $\pi$ from a point $P\in\PG(2,q^3)\setminus\{\pi,\ell\}$ onto $\ell$.  Note that the projections in $\mathscr S$ are either tangent or exterior splashes. More specifically,
\begin{enumerate}
\item The points $P\in\PG(2,q^3)\setminus\{\pi,\ell\}$ that lie on the extension of a line of $\pi$ project $\pi$ onto distinct tangent splashes. Hence there are $(q^2+q+1)(q^3-q-1)$ such tangent splashes in $\mathscr S$.
\item The exterior splashes in $\mathscr S$ can be divided into three distinct groups. Let $\ES_0$ be the exterior splash of $\pi$ onto $\ell$, and let $\ES_1,\ldots,\ES_{q-2}$ be the distinct exterior splashes on $\ell$ that have the same carriers $\car_1,\car_2$ as $\ES_0$. 
\begin{enumerate}
\item 
\begin{enumerate}
\item If $q$ is even, then $\ES_0$ is a projection of $\pi$ from $E_3$, and $\ES_i$, $i=1,\ldots,q-2$ is a projection from exactly $q^2+q+1$ points which lie in one orbit of $I$.
\item If $q$ is odd, then $\ES_i$, $i=1,\ldots,q-2$ is the projection of $\pi$ from exactly $q^2+q+1$ points  which lie in one orbit of $I$, with the exception of one of these which is also a projection of $\pi$ from $E_3$.  

\end{enumerate}
\item The remaining exterior splashes in $\S$ are either:
\begin{enumerate}
\item projections of $\pi$ from exactly two  points which are not in the same orbit under $I$; or 
\item projections of $\pi$ from exactly one  point.
\end{enumerate}\end{enumerate}
\end{enumerate}
\end{conjecture}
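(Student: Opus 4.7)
The plan is to address the two parts of the conjecture separately. Part~1 is elementary, part~2(a) follows from Theorem~\ref{proj-spl} together with the two preceding lemmas, and part~2(b) is the hard nut, which I would approach via a rigidity argument on conics in the spirit of the proof of Theorem~\ref{proj-spl}.

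For part~1, fix a point $P$ on the extension of a line $m$ of $\pi$ with $P\notin\pi\cup\ell$. The $q+1$ points of $\pi\cap m$ all project from $P$ onto the single point $m\cap\ell\in\ES_0$; two distinct points $X,Y\in\pi\setminus m$ cannot project from $P$ to a common point of $\ell$, for then $X,Y,P$ would be collinear, forcing the line $XY$ of $\pi$ to meet $m$ at $P\notin\pi$, contradicting that two lines of $\pi$ meet in $\pi$. Hence $\L_P$ is a tangent splash with centre $m\cap\ell$. For distinctness, different lines $m,m'$ of $\pi$ give different centres, and for two points $P_1,P_2$ on the same line $m$, equality $\L_{P_1}=\L_{P_2}$ forces every $X\in\pi\setminus m$ to lie on $P_1P_2=m$, a contradiction. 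The count $(q^2+q+1)(q^3-q-1)$ follows since each of the $q^2+q+1$ lines of $\pi$ contributes $(q^3+1)-(q+1)-1=q^3-q-1$ such points.

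For part~2(a), the preceding lemma restricts $|\Pi(\ES_i)|$, the number of projection points onto $\ES_i$, to $\{1,q^2+q+1\}$ when $q$ is even and to $\{q^2+q+1,q^2+q+2\}$ when $q$ is odd. Note that $E_3\notin\pi$ and $E_3$ lies on no line of $\pi$: the lines of $\pi$ form a single $I$-orbit by Theorem~\ref{propthm}, so if $E_3$ lay on one such line it would lie on all of them, forcing $E_3\in\pi$. Hence projection from $E_3$ is well-defined; by $I$-equivariance it is $I$-fixed, so its image has carriers $\{E_1,E_2\}$ and equals some $\ES_i$, which by Theorem~\ref{proj-spl} is $\ES_0$ precisely when $q$ is even. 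Since $E_3$ is the only $I$-fixed point outside $\pi\cup\ell$, any $\Pi(\ES_i)$ containing $E_3$ has size $1+k(q^2+q+1)$ for some $k\geq 0$, and the lemma then forces $|\Pi(\ES_0)|=1$ when $q$ is even and $|\Pi(\ES_i)|=q^2+q+2$ when $q$ is odd. The remaining $\ES_j$ have $\Pi(\ES_j)$ consisting of a single non-fixed $I$-orbit of size $q^2+q+1$.

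For part~2(b), let $\ES'\in\S$ be an exterior splash with carriers differing from $\{E_1,E_2\}$; then $\ES'$ is not $I$-fixed, its $I$-orbit has size $q^2+q+1$, and any projection point $P\in\Pi(\ES')$ lies in a non-fixed $I$-orbit of which $P$ alone projects onto $\ES'$ (the orbit of $P$ maps equivariantly onto the orbit of $\ES'$). Suppose $W\ne P$ also lies in $\Pi(\ES')$. Following the proof of Theorem~\ref{proj-spl}, the $q^2+q+1$ \orsls\ of $\pi$ project from $P$ onto one family, say $\X$, of \orsls\ of $\ES'$, while the other family $\Y$ is cut by joining $P$ to $(\pi,PP^q)$-special conics of $\pi$. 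Uniqueness of line projection forces $W$ to project the \orsls\ of $\pi$ into $\Y$, and symmetrically the lines from $W$ to \orsls\ of $\X$ cut $(\pi,WW^q)$-special conics of $\pi$. The Singer group of $\ES'$, cyclic of order $q^2+q+1$ and (by Theorem~\ref{cgreg}) acting regularly on each family of \orsls, induces a cyclic action on the conics through $W,W^q,W^{q^2}$, and the rigidity of conics (two distinct irreducible conics share at most four points) then constrains $W$ to a small set.

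The main obstacle is to promote this rigidity into the precise bound $|\Pi(\ES')|\le 2$, and to show that the two-point case~(i) is realised by an involution $\delta'$ of $\PG(2,q^3)$ that fixes $\ell$, fixes $\ES'$, swaps the two families $\X,\Y$ of \orsls\ in $\ES'$, and whose image $\delta'(P)$ lies in a different $I$-orbit from $P$ (ensured because $\delta'$ does not normalise $I$). Case~(ii) with one point then occurs exactly when $\delta'(\pi)\ne\pi$ prevents $\delta'(P)$ from lying in $\Pi(\ES')$. As a consistency check, the total $q^6-q^5-q^4+q^2+q$ of points in $\PG(2,q^3)\setminus(\pi\cup\ell)$ not on the extension of a line of $\pi$, expressed as $\sum|\Pi(\ES')|$ over the exterior splashes $\ES'\in\S$ and partitioned by $I$-orbits of splashes, must agree with the conjectured distribution from parts~2(a) and~2(b).
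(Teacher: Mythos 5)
The statement you are proving is presented in the paper as a \emph{conjecture}: the authors prove only the two preparatory lemmas preceding it (the lemma on $I$-orbit sizes of splashes in $\mathscr S$, and the lemma restricting the number of projection points of an exterior splash with carriers $E_1,E_2$ to $\{1,q^2+q+1\}$ or $\{q^2+q+1,q^2+q+2\}$) and explicitly leave the full classification open. Your proposal does not close that gap. Part 2(b) is the heart of the matter, and you say yourself that ``the main obstacle is to promote this rigidity into the precise bound $|\Pi(\ES')|\le 2$''; what you offer there is a restatement of the difficulty rather than a resolution. Note in particular that the conic-rigidity argument of Theorem~\ref{proj-spl} pins a second projection point down to a fixed point of the Singer cycle $I$ precisely because $I$ fixes both $\pi$ and the splash; for a splash $\ES'$ whose carriers are not $E_1,E_2$, the cyclic group of order $q^2+q+1$ stabilising $\ES'$ does \emph{not} fix $\pi$, so it does not permute the $(\pi,WW^q)$-special conics and the argument does not transfer. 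The proposed involution $\delta'$ realising the two-point case is likewise asserted, not constructed, and nothing establishes the dichotomy ``exactly one or exactly two points, in different $I$-orbits''.

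There are also gaps in the parts you present as complete. In part 1, to show that two points $P_1\ne P_2$ on the same extended line $m$ of $\pi$ give distinct tangent splashes, you argue that $\L_{P_1}=\L_{P_2}$ forces each $X\in\pi\setminus m$ onto the line $P_1P_2$; but equality of the two splashes as \emph{sets} does not force $XP_1\cap\li=XP_2\cap\li$ for each individual $X$, so the contradiction does not follow. (The paper's lemma yields distinctness only for the $q^2+q+1$ tangent splashes arising from a single $I$-orbit of projection points, which have distinct centres; distinctness for points on a common line of $\pi$, which share a centre, is part of what remains conjectural.) In part 2(a) your orbit-counting correctly recovers the paper's two lemmas and the placement of $E_3$, but it only determines $|\Pi(\ES_i)|$ \emph{conditional on} $\ES_i\in\mathscr S$; the conjecture asserts that every $\ES_i$, $i=1,\dots,q-2$, actually occurs as a projection, and no existence argument is given. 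So the proposal is an honest plan that reproduces what the paper already establishes, but the genuinely open parts of the conjecture remain open.
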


\subsection{Projection and tangent splashes}\Label{sec:proj-tgt}

Let $\pi$ be an \orsp\ that is tangent to $\li$ at the point $T$, that is, $\pi$ is a tangent \orsp. The lines of $\pi$ meet 
$\li$ in a set $\ST$ of $q^2+1$ points (including $T$) called a {\em tangent splash} with {\em centre} $T$. 
For a detailed study of tangent splashes, see \cite{BJ-tgt1}, in particular, by
\cite[Theorem 7.1]{BJ-tgt1}, tangent splashes are projectively equivalent to $\GF(q)$-linear sets of rank 3 and size $q^2+1$.
We can construct such a linear set by projecting an \orsp. We show that the linear set obtained by projecting  a tangent \orsp\ $\pi$ onto $\li$ can never equal the tangent splash of $\pi$ onto $\li$.

%\newpage

\begin{lemma}\Label{proj-count} Let $\ell,m$ be lines of $\PG(2,q^3)$, and $b$ an \orsl\ of $m$ disjoint from $\ell$. Then each \orsl\ of  $\ell$, disjoint from $m$, is the projeciton of $b$ from exactly one point $P$ not on $\ell$ or $m$. 
%In $\PG(2,q^3)$ consider an \orsl\ $b$ on the line $m$ and a line $\ell$ not containing any points of $b$. 
%%Then the projections of $b$ onto $\ell$ from the $q^3(q^3-1)$ points not on $\ell$ and not on the extension $m$ of $b$, are in one to one correspondence with the \orsl s of $\ell$ not containing the point $\ell\cap m$.
%Then there is a 1--1 correspondence between the points $A$ ($\not\in \ell\cup m$) and the \orsl s $c$ (with $P\not\in c$) given by: the projection from $A$ of $b$ onto $\ell$ is $c$.
\end{lemma}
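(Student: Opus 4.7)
The plan is to interpret the projection from $P$ as the perspectivity $\sigma_P\colon m \to \ell$ with centre $P$, and show that the map $\Phi\colon P \mapsto \sigma_P(b)$ is a bijection from $\PG(2,q^3) \setminus (\ell \cup m)$ onto the set of \orsls\ of $\ell$ disjoint from $m$. Setting $X = \ell \cap m$, I would first observe that $\sigma_P$ is a projectivity between the $\PG(1,q^3)$'s $m$ and $\ell$, hence maps the \orsl\ $b$ to an \orsl\ of $\ell$. Since $\sigma_P$ fixes $X$ and $X \notin b$, the image $\sigma_P(b)$ also avoids $X$, so $\Phi$ is well-defined into the stated codomain.

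For uniqueness, I would fix an \orsl\ $a$ of $\ell$ disjoint from $m$ and count projectivities $m \to \ell$ that send $b$ to $a$ and fix $X$; such projectivities are exactly the perspectivities $\sigma_P$, and the centre $P$ is recovered as the common point of the lines $Q\sigma_P(Q)$ for $Q \in b$. Choose any one projectivity $\pi_0 \colon m \to \ell$ with $\pi_0(b)=a$ (which exists because $\PGL(2,q^3)$ is transitive on \orsls\ of $\PG(1,q^3)$). Every projectivity $m \to \ell$ sending $b$ to $a$ has the form $\pi_0 \circ \alpha$ with $\alpha$ in the setwise stabiliser $\PGL(2,q^3)_b \cong \PGL(2,q)$, and the perspectivity condition becomes $\alpha(X) = Y$, where $Y := \pi_0^{-1}(X)$. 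Since $X \notin a$ gives $Y \notin b$, what remains is to show that $\PGL(2,q^3)_b$ acts regularly on $m \setminus b$.

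The crux of the argument is this regularity claim, and it is where I would expect the only real content of the proof to lie. Choosing coordinates on $m$ so that $b = \PG(1,q)$, every element of $\PGL(2,q^3)_b$ is represented by a matrix $\bigl(\begin{smallmatrix} a & c \\ d & e \end{smallmatrix}\bigr)$ with entries in $\GF(q)$. For such a matrix to fix a point $X = (x_0, 1)$ with $x_0 \in \GF(q^3) \setminus \GF(q)$, one needs $d\,x_0^2 + (e-a)\,x_0 - c = 0$; this is a polynomial of degree at most $2$ over $\GF(q)$ vanishing at $x_0$, but $x_0$ has minimal polynomial of degree exactly $3$ over $\GF(q)$, forcing $d = 0$, $e = a$, $c = 0$, i.e.\ the identity of $\PGL$. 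So the stabiliser of $X$ in $\PGL(2,q^3)_b$ is trivial, and as $|\PGL(2,q)| = q(q^2-1) = |m \setminus b|$, the action on $m \setminus b$ is regular.

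Consequently there is a unique $\alpha \in \PGL(2,q^3)_b$ with $\alpha(X) = Y$, hence a unique perspectivity $\sigma_P \colon m \to \ell$ sending $b$ to $a$, hence a unique centre $P \notin \ell \cup m$ with $\Phi(P) = a$. Equivalently, one could finish by a counting shortcut: both the domain $\PG(2,q^3) \setminus (\ell \cup m)$ and the codomain of \orsls\ of $\ell$ avoiding $X$ have size $q^6 - q^3$, so the injectivity proved above forces bijectivity directly.
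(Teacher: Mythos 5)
Your proof is correct. It rests on the same core fact as the paper's — that the setwise stabiliser of an \orsl\ $b$ in $\PGL(2,q^3)$ has order $q(q^2-1)$ and acts regularly on the $q^3-q$ points of the line off $b$ (the paper establishes this by the same orbit--stabiliser count, and then dualises it to regularity of $\PGL(2,q^3)_X$ on the sublines avoiding $X$) — but the two arguments package it differently. The paper introduces the group $L$ of central collineations of the plane with axis $m$ and centre on $\ell$, computes $|L|=q^3(q^3-1)$ by counting elations and homologies separately, shows $L$ acts regularly both on the points off $\ell\cup m$ and on the \orsls\ of $\ell$ avoiding $X=\ell\cap m$, and then transports a single projection $c=\sigma_P(b)$ around its $L$-orbit while $b$ stays pointwise fixed. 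You instead classify the perspectivities $m\to\ell$ directly: the projectivities sending $b$ to $a$ form the coset $\pi_0\circ\PGL(2,q^3)_b$, and the perspectivity condition $\alpha(X)=\pi_0^{-1}(X)$ singles out exactly one of them by the regularity of $\PGL(2,q^3)_b$ on $m\setminus b$. Your route is somewhat more economical — it avoids introducing $L$ and its order computation — at the price of invoking the classical fact that a projectivity between distinct lines fixing their common point is a perspectivity; you do justify this by recovering the centre as $\bigcap_{Q\in b} Q\sigma_P(Q)$, which is enough. Both arguments ultimately exhibit the same bijection between the $q^6-q^3$ points of $\PG(2,q^3)\setminus(\ell\cup m)$ and the $q^6-q^3$ \orsls\ of $\ell$ avoiding $X$, so your closing counting remark is consistent but not needed once regularity (transitivity plus trivial stabiliser) is in hand.
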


\begin{proof}
We first show that  the subgroup $K$ of $\PGL(2,q^3)$ (acting on $\PG(1,q^3)$) fixing a point $P$ of $\PG(1,q^3)$ is regular on the \orsl s not containing $P$.
Note that  $\PGL(2,q^3)$ acts faithfully on an \orsl\ $b$ of $\PG(1,q^3)$, as any such homography fixing $3$ points is the identity. Thus $|\PGL(2,q^3)_b|=|PGL(1,q)|=q(q^2-1)$.
As $\PGL(2,q^3)$ is sharply 3-transitive on the points of $\PG(1,q^3)$, it is transitive on the \orsl s of $\PG(1,q^3)$.  Consider the subline $b=\PG(1,q)$ of $\PG(1,q^3)$. From above, the subgroup $H$ of $\PGL(2,q^3)$ fixing $b$ is of order $q(q^2-1)$.  If we consider a homography acting on $P=(\tau,1)^t$, a straightforward calculation shows that $H_P$ is just the identity, hence by the orbit stabilizer theorem, $H$ is transitive on the points of $\PG(1,q^3)$ not on $b$.
Now suppose $P$ is a point not on $b$. If there is an element of $K=\PGL(2,q^3)_P$ which fixes $b$, then this element belongs to $H$ and fixes the point $P$ outside $b$, and so is the identity.  Thus $K$ acts semiregularly on the $q^3(q^3-1)$  \orsl s not through $P$, but as $|K|=q^3(q^3-1)$, this action is transitive, and hence regular.

Consider the group $L$ of axial homographies with axis $m$ and centre on $\ell$. There are $q^3-1$ non-identity elations in $L$, and $q^3(q^3-2)$  non-identity homologies in $L$, so $|L|=q^3(q^3-1)$.  As an element of $L$ fixes $\ell$ and $P=\ell\cap m$, it induces a homography $\sigma$ on $\ell$ which fixes $P$. As the subgroup $K$ above is regular on the \orsl s not containing $P$, $\sigma$ is either the identity, or acts semi-regularly on the \orsl s of $\ell$ not through $P$.  If $\sigma$ acts as the identity on $\ell$, then as it has axis $m$, it follows that it is the identity on $\PG(2,q^3)$.  Hence $L$ acts semi-regularly and hence regularly on the \orsl s of $\ell$ not through $P$.

Now consider a point $X$ projecting $b$ onto an \orsl\ $c$ on $\ell$.  Under the action of $L$, $b$ is fixed pointwise (as $b$ is contained in the axis of the elements of $L$) and $c$ is mapped to the $q^3(q^3-1)$ \orsl s on $\ell$ not through $P$.  Further, as $L$ acts semi-regularly on the $q^3(q^3-1)$ points of $\PG(2,q^3)$ not on $\ell\cup m$, it follows that $L$ is regular on the points not on $\ell\cup m$.  Thus distinct points of $\PG(2,q^3)$ not on $\ell\cup m$ project $b$ onto distinct \orsl s of $\ell$.
\end{proof}

\begin{theorem}\Label{proj-tgt}
Let $\pi$ be a  tangent \orsp\ with tangent splash $\ST$ on $\li$.
Then $\ST$ is not the projection of $\pi$ onto $\li$ from a point $P$. 
\end{theorem}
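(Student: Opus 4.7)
The plan is to reduce to a canonical coordinate setup and then exclude every candidate point $P$ by a direct $\GF(q)$-linear calculation. First I pin down the location of $P$: since $|\ST|=q^2+1<|\pi|=q^2+q+1$, the projection must collapse exactly $q$ points of $\pi$, and because two extended lines of $\pi$ through $P\notin\pi$ would force $P\in\pi$, this compels $P$ to lie on exactly one extended line $\ell_0$ of $\pi$, whose $q+1$ points all collapse to $\ell_0\cap\li$. The distinguished (weight-$2$) point of $\L_P$ is $\ell_0\cap\li$, while that of $\ST$ is $T=\pi\cap\li$ (the unique point of $\ST$ on more than one line of $\pi$, namely on the $q+1$ lines of $\pi$ through $T$); matching these yields $\ell_0\cap\li=T$, so $\ell_0$ is a line of $\pi$ through $T$.

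Next, using the transitivity of $\PGL(3,q^3)$ on (tangent \orsp, tangent line) pairs---which follows from the orbit analysis of $G_\pi$ in the proof of Theorem~\ref{propthm}---together with the transitivity of the stabilizer of $(\pi,\li,T)$ on the pencil of $q+1$ lines of $\pi$ through $T$, I may without loss of generality take $\pi=\PG(2,q)$, $\li=[1,\tau,0]$ (which meets $\pi$ only at $T=(0,0,1)$), and $\ell_0=[1,0,0]$. Any admissible $P\in\ell_0\setminus(\pi\cup\li)$ then has coordinates $P=(0,1,b)$ for some $b\in\GF(q^3)\setminus\GF(q)$. A direct computation of $PX\cap\li$ for $X=(x,y,z)\in\pi$ with $x\ne 0$ (the complement of $\ell_0\cap\pi$, which projects to $T$) yields
$$
\L_P\setminus\{T\} \;=\; b+\tau\bigl(\GF(q)+b\,\GF(q)\bigr),
$$
while evaluating the splash points of the $q^2$ lines $[l,m,1]$ of $\pi$ not through $T$ gives $\ST\setminus\{T\}=\GF(q)+\GF(q)\tau$.

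The hard part will be to observe that these two sets cannot coincide for any admissible $b$. The right-hand side is a $\GF(q)$-subspace while the left-hand side is a coset of the $2$-dimensional $\GF(q)$-subspace $\tau(\GF(q)+\GF(q)b)$, so equality first forces $b$ itself to lie in $\GF(q)+\GF(q)\tau$; writing $b=\beta_0+\beta_1\tau+\beta_2\tau^2$ this gives $\beta_2=0$. The residual condition $\tau(\GF(q)+\GF(q)b)=\GF(q)+\GF(q)\tau$ then requires $\tau b$ to have vanishing $\tau^2$-component, which, using the minimal relation $\tau^3=t_0+t_1\tau+t_2\tau^2$, forces $\beta_1+\beta_2 t_2=0$ and hence $\beta_1=0$; thus $b=\beta_0\in\GF(q)$, contradicting $b\notin\GF(q)$. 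Therefore no $P$ satisfies $\ST=\L_P$.
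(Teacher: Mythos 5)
Your coordinate computation for the main case is correct, and it is a genuinely more elementary route than the paper's, which instead counts orbits of the group $J=\PGL(3,q^3)_{\pi,T,\li}$ on projected sublines and derives a numerical contradiction ($q(q-1)$ versus $q$). However, the reduction that precedes your computation has two problems, one of which is a missing case rather than a missing detail.

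First, the elimination of projection points $P$ lying on a line of $\pi$ \emph{not} through $T$ is not proved. You assert that the weight-$2$ point of $\L_P$ (namely $\ell_0\cap\li$) must coincide with the weight-$2$ point $T$ of $\ST$, but these two distinguished points are defined by two different structures carried by the same point set: one by the fibres of the projection from $P$, the other by incidence with the lines of $\pi$. Equality of the point sets $\L_P=\ST$ does not by itself force these distinguished points to agree; note that $T$ always lies in $\L_P$ (since $T\in\pi\cap\li$ projects to itself) and $\ell_0\cap\li$ always lies in $\ST$, so no contradiction arises at the level of sets. What is needed is that the head of a club is intrinsic to its point set, equivalently that every \orsl\ contained in a tangent splash passes through its centre; this is exactly \cite[Corollary 7.3]{BJ-tgt1}, which the paper invokes to dispose of this case (project an \orsl\ $b$ of $\pi$ not through $T$ and not on $\ell_0$: one checks its image is an \orsl\ of $\ST$ missing $T$). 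Without that input, your argument simply does not address points $P$ on lines of $\pi$ avoiding $T$.

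Second, the stabilizer of $(\pi,\li,T)$ is \emph{not} transitive on the pencil of $q+1$ lines of $\pi$ through $T$: by \cite[Theorem 4.2]{BJ-tgt1}, quoted in the paper's own proof, this stabilizer consists of central collineations with centre $T$, and such collineations fix \emph{every} line through $T$. So you cannot normalise $\li=[1,\tau,0]$ (with $\tau$ the fixed primitive element) and $\ell_0=[1,0,0]$ simultaneously; the pairs (tangent line at $T$, line of $\pi$ through $T$) fall into $q+1$ orbits, not one. This error is repairable: normalise $(\pi,T,\ell_0)$ first, accept $\li=[1,\sigma,0]$ for an arbitrary $\sigma\in\GF(q^3)\setminus\GF(q)$, and observe that your computation uses only the $\GF(q)$-linear independence of $1,\sigma,\sigma^2$, which holds for every such $\sigma$ because $\GF(q^3)$ has no intermediate subfields. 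With that repair, and with the first gap filled by \cite[Corollary 7.3]{BJ-tgt1} (or an explicit proof that the head of a club is unique, which should be checked for small $q$), your proof goes through.
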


\begin{proof} We let $\L_{X,P}$ denote the projection of the set of points $X$ of $\PG(2,q^3)$ onto $\li$ from the point $P$. 
Note that if we project the tangent \orsp\ $\pi$ onto $\li$ from a point $P\in\li$ or $P\in\pi$, then $\LpiP$ does not have enough points to be the tangent splash $\ST$ of $\pi$. Similarly, if $P$ is a point not on a line of $\pi$, then $\LpiP$ has too many points to be a tangent splash.
Suppose we project from a point $P\notin\pi$ that is on a line $\ell$ of $\pi$ not through the centre $T$ of  $\ST$, and suppose $\LpiP=\ST$. Let $b$ be an \orsl\ of $\pi$ not through $T$, and not on $\ell$, then $\L_{b,P}$ is an \orsl\ of $\ST$ not through the centre $T$, contradicting \cite[Corollary 7.3]{BJ-tgt1}. So this case cannot occur.

So, suppose we project $\pi$ from a point $P\notin\pi$ that lies on a line of $\pi$ through the centre $T$, and suppose $\LpiP=\ST$. 
Let $J$ be the subgroup of $\PGL(3,q^3)_\pi\cong\PGL(3,q)$ that fixes $T$ and $\li$.
Consider $J_P$, the subgroup of $J$ fixing the projection point $P$.
From the proof of \cite[Theorem 4.2]{BJ-tgt1}, 
$J$ contains only central collineations with centre $T$ and axes belonging to $\pi$. So $P$ is fixed by a non-identity element $\sigma$ of $J$ only if it lies on the axis of $\sigma$.  So $J_P$ is the set of $q$ elations with centre $T$ and axis $TP$.  Hence $|P^J|=|J|/|J_P|=q(q-1)$.

Let  $b$ be an \orsl\ of  $\pi$ not through $T$. Consider the projection $c=\L_{b,P}$.  If $Q$ is another point in $P^J$, then  by Lemma~\ref{proj-count}, it follows that $\L_{b,Q}$  is not $c$.  Thus the projections of $b$ under the $q(q-1)$ different points in $P^J$ is the orbit of $c$ under $J$, and is of size $q(q-1)$.

However, each point $A$ of $\pi$ determines a unique \orsl\ of $\ST$ via the intersection of the lines of $\pi$ through $A$ with $\li$, and all the \orsl s in the tangent splash $\ST$ are determined this way (by \cite[Corollary 7.4]{BJ-tgt1}). As the points of $\pi$ under $J$ fall into $q+1$ orbits, each consisting of $q$  points lying on a line through $T$, it follows that $|c^J|=q$, contradicting the previous paragraph.
\end{proof}

\section{Order-$q$-subplanes with a common exterior splash}\Label{sec:common-splash}

In this section we  investigate the intersection of two exterior \orsps\ that have a common exterior splash. 
We begin with a counting result.
\begin{theorem}\Label{propsplash}
%\begin{enumerate}
%\item \Label{cextspl} There are $\frac12\r^3(\r^3+1)(\r-1)$ exterior splashes on a line. 
%\item \Label{spperspl} 
Given an exterior splash $\ES$ on $\li$, there are $2\r^6(\r^3-1)$ \orsps\ with exterior splash $\ES$.
%\item \Label{splashes-one-pt} Through a fixed point of $\li$, there are $\frac12 q^3(q-1)(q^2+q+1)$ exterior splashes of $\li$.
%\end{enumerate}
\end{theorem}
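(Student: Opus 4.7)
The plan is to apply the orbit-stabilizer theorem to the action of $G_\li := \PGL(3,\r^3)_\li$ on the set of \orsps\ exterior to $\li$. By Theorem~\ref{transsplashes}, $G_\li$ is transitive on exterior \orsps\ of $\li$, and hence on the set of exterior splashes on $\li$. Thus, if $N$ denotes the number of \orsps\ with splash $\ES$, the subgroup $H := G_{\li,\ES}$ acts transitively on these $N$ \orsps, with the stabilizer of any one such $\pi$ equal to $G_{\pi,\li}=I$, the Singer group of order $\r^2+\r+1$ of Theorem~\ref{propthm}. Consequently
$$N \;=\; \frac{|H|}{\r^2+\r+1},$$
and the task reduces to computing $|H|$.

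For this, I consider the restriction homomorphism $\rho \colon G_\li \to \PGL(2,\r^3)$. Its kernel $K_0$ consists of the collineations of $\PG(2,\r^3)$ fixing $\li$ pointwise. Choosing coordinates with $\li\colon z=0$, every element of $K_0$ is represented by a matrix
$$\begin{pmatrix} 1 & 0 & 0 \\ 0 & 1 & 0 \\ a & b & c \end{pmatrix}, \qquad a, b \in \GF(\r^3),\ c \in \GF(\r^3)^*,$$
so $|K_0| = \r^6(\r^3-1)$. Since $|G_\li| = |G|/(\r^6+\r^3+1) = \r^9(\r^3-1)(\r^6-1) = |K_0|\cdot|\PGL(2,\r^3)|$, the map $\rho$ is surjective. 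Because $K_0$ fixes $\li$ pointwise, $K_0 \subseteq H$, and the quotient $H/K_0$ is therefore the full setwise stabilizer of $\ES$ inside $\PGL(2,\r^3)$. By Theorem~\ref{transsplashes} we may take $\ES$ to be the splash of Example~\ref{splash-eg}, and Lemma~\ref{spl-gp-eg} identifies this stabilizer as $\langle \Gamma, \Delta\rangle$, of order $2(\r^2+\r+1)$. Hence $|H| = 2\r^6(\r^3-1)(\r^2+\r+1)$ and $N = 2\r^6(\r^3-1)$, as required.

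The main point to verify is that Lemma~\ref{spl-gp-eg} really describes the \emph{full} $\PGL(2,\r^3)$-stabilizer of $\ES$ rather than only a subgroup. This can be confirmed using Result~\ref{carriers-belong-splash}: any homography stabilizing $\ES$ must either fix or swap the uniquely determined carriers $\car_1,\car_2$. The pointwise stabilizer of $\{\car_1,\car_2\}$ in $\PGL(2,\r^3)$ is cyclic of order $\r^3-1$ (the diagonal scalar homographies in a basis adapted to $\car_1,\car_2$), and the elements of this group that preserve the splash are exactly the scalars $\lambda$ satisfying $\lambda^{\r^2+\r+1}=1$, i.e.\ $\langle\Gamma\rangle$ of order $\r^2+\r+1$. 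Since $\Delta$ realises the swap of the two carriers, the full stabilizer has order $2(\r^2+\r+1)$, justifying the input to the calculation and completing the proof plan.
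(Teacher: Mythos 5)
Your proof is correct, but it takes a genuinely different route from the paper. The paper divides the number of \orsps\ exterior to $\li$ (obtained by counting quadrangles, $\r^9(\r^3-1)(\r^3+1)(\r-1)$) by the number of exterior splashes on $\li$ (obtained from the classification of Bruck covers of $CG(3,\r)$, $\frac12\r^3(\r^3+1)(\r-1)$), invoking Theorem~\ref{transsplashes} to guarantee the quotient is well defined. You instead run orbit--stabilizer inside the stabilizer $H=G_{\li,\ES}$, computing $|H|$ via the exact sequence coming from restriction to $\li$: the pointwise stabilizer of $\li$ of order $\r^6(\r^3-1)$, extended by the splash stabilizer of order $2(\r^2+\r+1)$ in $\PGL(2,\r^3)$ from Lemma~\ref{spl-gp-eg}, then dividing by $|I|=\r^2+\r+1$ from Theorem~\ref{propthm}. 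Your argument avoids both the quadrangle count and the enumeration of covers, and yields as a byproduct the order of $G_{\li,\ES}$ (which the paper only uses implicitly later); its cost is that it leans on Lemma~\ref{spl-gp-eg} being the \emph{full} homography stabilizer of $\ES$, which you rightly flag and verify via the uniqueness of the carriers in Result~\ref{carriers-belong-splash}. One small notational point: with the paper's convention that points are column vectors acted on from the left ($\phi(x,y,z)=T(x,y,z)^t$), the pointwise stabilizer of $z=0$ consists of the matrices
\[
\begin{pmatrix} 1 & 0 & a \\ 0 & 1 & b \\ 0 & 0 & c \end{pmatrix},\qquad a,b\in\GF(\r^3),\ c\in\GF(\r^3)^*,
\]
i.e.\ the transposes of the matrices you wrote (yours fix the pencil through $(0,0,1)$ rather than the line $z=0$ pointwise); the order $\r^6(\r^3-1)$ and hence the rest of the computation are unaffected.
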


\begin{proof} By Theorem~\ref{spiscov},
 the number of exterior splashes is equal to the number of covers of a circle geometry $CG(3,q)$, which is $\frac12\r^3(\r^3+1)(\r-1)$.
Counting quadrangles gives that there are $q^9(q^3-1)(q^3+1)(q-1)$ \orsps\ exterior to a given line $\li$. Further,  by Theorem~\ref{transsplashes}, the group of homographies of $\PG(2,q^3)$ fixing $\li$  is transitive on the \orsp s exterior to $\li$. Hence the number of \orsps\ with a fixed exterior splash is the total number of \orsp s exterior to $\ell$ divided by the number of exterior splashes on $\ell$, that is, $2q^6(q^3-1)$.
\end{proof}

\begin{theorem}\Label{ext2}
Let $\ES$ be an exterior splash of $\li$ and let $\ell$ be an \orsl\ exterior to $\li$, whose extension to $\PG(2,q^3)$ contains a point of $\ES$.  Then there are exactly two \orsps\ that contain $\ell$ and have exterior splash $\ES$.
\end{theorem}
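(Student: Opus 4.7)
The plan is to use double counting. Let $N$ be the number of pairs $(\pi,m)$ with $\pi$ an exterior \orsp\ of splash $\ES$ and $m$ a line of $\pi$. By Theorem~\ref{propsplash} there are $2q^6(q^3-1)$ such $\pi$, each with $q^2+q+1$ lines, giving $N=2q^6(q^3-1)(q^2+q+1)$. Each such $m$ is automatically an \orsl\ exterior to $\li$ (since $\pi$ is exterior to $\li$) whose extension meets $\li$ in a point of $\ES$ (by definition of splash), hence is a valid $\ell$ in the sense of the statement.

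Next I would count the total number $M$ of such valid $\ell$'s. For each $P\in\ES$ there are $q^3$ lines of $\PG(2,q^3)$ through $P$ other than $\li$; each, as a $\PG(1,q^3)$, carries $q^2(q^2+q+1)(q^2-q+1)$ \orsls, of which $q^2(q^2+q+1)$ contain $P$, leaving $q^3(q-1)(q^2+q+1)$ that are exterior to $\li$. Summing over the $q^2+q+1$ points of $\ES$ gives $M=q^6(q-1)(q^2+q+1)^2$. The average value of $k(\ell):=\#\{\pi\text{ with splash }\ES:\ \ell\subset\pi\}$ over valid $\ell$'s is therefore $N/M=2$.

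To finish, I would upgrade the average to a constant by a transitivity argument. By Theorem~\ref{transsplashes}, $\PGL(3,q^3)_{\li}$ is transitive on \orsps\ exterior to $\li$; since the map $\pi\mapsto$ splash of $\pi$ is equivariant, a standard lifting argument shows the stabiliser $H=\PGL(3,q^3)_{\li,\ES}$ is transitive on the $2q^6(q^3-1)$ \orsps\ with splash $\ES$. Combined with Theorem~\ref{propthm} (the Singer group $I=\PGL(3,q^3)_{\pi,\li}$ is transitive on the lines of $\pi$), $H$ is transitive on pairs $(\pi,\ell)$ of such \orsps\ with a chosen line.

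The main obstacle is then showing $H$ is transitive on the full set of valid $\ell$'s, not only on those already arising as lines of some $\pi$ with splash $\ES$. The plan is a layered orbit-stabiliser computation: $H$ is transitive on $\ES$ via the element $\Gamma$ of order $q^2+q+1$ from Lemma~\ref{spl-gp-eg}, and the orders $|H|=2q^6(q^3-1)(q^2+q+1)$ and $|H_P|=2q^6(q^3-1)$ must split successively to match the orbit sizes $q^3$ (lines through $P$ other than $\li$) and $q^3(q-1)(q^2+q+1)$ (exterior \orsls\ on each such line), yielding a final stabiliser of order $2$ (geometrically, the involution interchanging the two \orsps\ through $\ell$ with splash $\ES$). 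Once this transitivity is established, $k(\ell)$ is constant and equals the average $N/M=2$, completing the proof.
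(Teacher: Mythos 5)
Your overall strategy is the same as the paper's: double-count pairs $(\pi,\ell)$ with $\pi$ an exterior \orsp\ having splash $\ES$ and $\ell$ a line of $\pi$, and then use a transitivity argument to turn the average value $2$ into a constant. Your arithmetic is correct and agrees with the paper's: $N=2q^6(q^3-1)(q^2+q+1)$ pairs, and $M=q^6(q-1)(q^2+q+1)^2=q^6(q^3-1)(q^2+q+1)$ valid sublines $\ell$, giving $N/M=2$. The lifting argument showing that $H=\PGL(3,q^3)_{\li,\ES}$ is transitive on the $2q^6(q^3-1)$ \orsps\ with splash $\ES$ is also fine, as is the deduction (via Theorem~\ref{propthm}) that $H$ is transitive on pairs $(\pi,\ell)$.

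However, there is a genuine gap exactly where you flag "the main obstacle": the transitivity of $H$ on \emph{all} valid $\ell$, not just those lying in some \orsp\ with splash $\ES$. Your proposed "layered orbit-stabiliser computation" does not constitute a proof: the fact that $|H|$, $|H_P|$, etc.\ are divisible by the required orbit sizes $q^3$ and $q^3(q-1)(q^2+q+1)$ shows only that transitivity is numerically consistent, not that it holds; a subgroup of the right order could still have several smaller orbits. Worse, your justification that the final stabiliser has order $2$ ("the involution interchanging the two \orsps\ through $\ell$") presupposes that there are exactly two such \orsps, which is the statement being proved, so that step is circular. The paper closes this gap with an external input you do not have: by \cite[Theorem 2.5]{BJ-tgt2}, the subgroup of $\PGL(3,q^3)$ fixing $\li$ \emph{pointwise} (which automatically fixes $\ES$ and so lies in $H$) is transitive on the exterior \orsls\ of any line $m\ne\li$; combined with the transitivity of $I$ on the points of $\ES$ and on the lines through such a point, this yields the transitivity of $H$ on valid $\ell$'s directly, without any circularity. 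To complete your proof you would need to supply this (or an equivalent) transitivity statement explicitly rather than inferring it from order counts.
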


\begin{proof}
Let $\pi$ be an \orsp\ with exterior splash $\ES$, and let $\ell$ be a line of $\pi$. Consider the subgroup $I=G_{\pi,\li}$ of $G=\PGL(3,q^3)$ fixing $\pi$ and $\li$. By 
 Theorem~\ref{propthm} the group $I$ fixes $\ES$ and is transitive on the points of $\ES$.  By \cite[Theorem 2.5]{BJ-tgt2}, the subgroup of $G$ fixing $\ell$ point-wise is transitive on the exterior \orsl s on a line $m\ne\ell$. Thus, given an exterior splash $\ES$, the group fixing $\ES$ is transitive on the exterior \orsl s lying on a line through a point of $\ES$.  Hence the number $n$ of exterior \orsp s containing an \orsl\ $\ell$ lying on a line through a point of a given exterior splash $\ES$ is a constant.  Thus if we count the pairs $(\pi,\ell)$ where $\pi$ is an exterior \orsp\ with a given exterior splash $\ES$, containing any of the $\r^3(\r^3-1)$ \orsl s $\ell$ lying on any of the $\r^3$ lines through a point of a given exterior splash $\ES$, using Theorem~\ref{propsplash}, we get
\[ 2\r^6(\r^3-1)\times(\r^2+\r+1)=(\r^2+\r+1)\times\r^3\times \r^3(\r^3-1)\times n 
\]
and so $n=2$ as required.
\end{proof}

Recall that three collinear points lie in a unique \orsl, and a quadrangle lies in a unique \orsp. Hence 
two \orsps\ in $\PG(2,q^3)$ can meet in either $0,1,2$ or $3$ points, or in $q+1$ points lying on an \orsl,  or in $q+2$ points containing an \orsl.  
In Lemma~\ref{ext2}, we showed that there are exactly two exterior \orsps\ that share an \orsl\ and have the same exterior splash.
We now show that these two \orsps\ do not have any further common points.

\begin{theorem}\Label{orsp-common-splash}
Let $\pi_1$, $\pi_2$ be two exterior \orsps\ with a common exterior splash $\ES$ on $\li$. Then $\pi_1,\pi_2$ meet in at most $q+1$ points, that is, $\pi_1,\pi_2$ meet in $0,1,2$ or $3$ points, or in an \orsl.
\end{theorem}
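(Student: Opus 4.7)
The plan is to argue by contradiction: suppose $\pi_1\cap\pi_2=b\cup\{P\}$, where $b$ is a common \orsl\ and $P\notin b$. First I will observe that the $q+1$ lines $\langle P,X\rangle$ for $X\in b$ are lines of both $\pi_1$ and $\pi_2$, so the $\pi_1$-\psline\ and the $\pi_2$-\psline\ through $P$ coincide as subsets of $\ES$. Since the $2(q^2+q+1)$ \orsls\ of $\ES$ split into two intrinsic families, one of which is the set of \psline s of any given \orsp\ with splash $\ES$ (as made explicit in Theorem~\ref{orsls-same}), the existence of this common \psline\ forces $\pi_1$ and $\pi_2$ to have their \psline s in the same family.

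Using this common family I will define a bijection $\psi\colon\pi_1\to\pi_2$ by sending $Q$ to the point of $\pi_2$ whose $\pi_2$-\psline\ equals the $\pi_1$-\psline\ of $Q$; in particular $\psi(P)=P$. Because a line $m$ of $\pi_i$ contains $Q$ iff $m\cap\li$ lies in the \psline\ of $Q$, the map $\psi$ sends each line $m$ of $\pi_1$ to the unique line $m'$ of $\pi_2$ satisfying $m'\cap\li=m\cap\li$; thus $\psi$ is a collineation between the two subplanes of order $q$, and the standard quadrangle argument extends it uniquely to some $\tilde\psi\in\PGL(3,q^3)$ with $\tilde\psi(\pi_1)=\pi_2$.

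The next task is to show that $\tilde\psi$ fixes $\li$ pointwise. For each line $m$ of $\pi_1$ the image $\tilde\psi(m)$ meets $\li$ at $m\cap\li$, so if $\tilde\psi(\li)$ were different from $\li$ the restriction $\tilde\psi|_{\li}$ would have to coincide with the perspectivity from $\psi(Q)$ on the \psline\ of $Q$ for every $Q\in\pi_1$. Since two perspectivities with distinct centres are distinct projectivities and each \psline\ has $q+1\geq 3$ points, this is impossible, forcing $\tilde\psi(\li)=\li$. Then $\tilde\psi$ fixes every point of $\ES$; as $|\ES|=q^2+q+1\geq 7$ and the action of $\PGL(2,q^3)$ on $\li$ is faithful (any homography fixing three points being the identity), $\tilde\psi|_{\li}$ is the identity. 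So $\tilde\psi$ is a central collineation with axis $\li$, and since it also fixes $P\notin\li$, it is a homology with centre $P$.

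To conclude, observe that $b\subset\pi_2=\tilde\psi(\pi_1)$ implies $\tilde\psi^{-1}(b)$ is an \orsl\ of $\pi_1$ whose $\PG(2,q^3)$-extension passes through the fixed splash point $B=b\cap\li$. By the splash bijection $b$ is the unique \orsl\ of $\pi_1$ with infinity point $B$, so $\tilde\psi$ must fix the line of $\PG(2,q^3)$ carrying $b$. However, a non-identity homology with centre $P$ and axis $\li$ fixes only $\li$ and the lines through $P$; the line carrying $b$ is neither $\li$ (as $\pi_1$ is exterior to $\li$) nor passes through $P$ (for otherwise $\pi_1$ would contain $b\cup\{P\}$ as $q+2$ collinear points, impossible in a subplane of order $q$). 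This contradiction completes the proof. I expect the main technical hurdle to be the perspectivity-plus-faithfulness argument that forces $\tilde\psi|_{\li}$ to be the identity.
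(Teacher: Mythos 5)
Your strategy is genuinely different from the paper's: the paper proves this by a direct coordinate computation (parametrising the intersection points $M_bN_c\cap\li$ and showing that the condition $\theta_{bc}^{q^2+q+1}=-1$ for all $b,c\in\GF(q)$ forces $\mu\in\GF(q)$), whereas you build a collineation $\tilde\psi$ carrying $\pi_1$ to $\pi_2$ and show it must be a homology with axis $\li$ fixing a forbidden line. The skeleton of your argument is attractive and, I believe, repairable, but as written it has a genuine gap at the extension step. The ``standard quadrangle argument'' produces a homography $\tilde\psi\in\PGL(3,q^3)$ that agrees with $\psi$ on one quadrangle and maps $\pi_1$ to $\pi_2$; it does \emph{not} follow that $\tilde\psi$ agrees with $\psi$ on all of $\pi_1$. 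Your combinatorially defined $\psi$ is a priori only an abstract collineation of the order-$q$ subplanes, and a collineation of $\PG(2,q)$ may involve a Frobenius twist, in which case no homography of $\PG(2,q^3)$ restricts to it; two collineations of $\PG(2,q)$ agreeing on a quadrangle need not be equal. This matters because everything downstream --- the perspectivity argument forcing $\tilde\psi(\li)=\li$, and the conclusion that $\tilde\psi$ fixes every point of $\ES$ --- uses that $\tilde\psi$ sends \emph{every} line $m$ of $\pi_1$ to the line of $\pi_2$ with the same trace on $\li$, not just the six lines of a quadrangle.

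The gap can be closed by running your construction in the opposite order. Since $\PGL(3,q^3)_{\li}$ is transitive on \orsps\ with splash $\ES$ (Theorem~\ref{transsplashes}), the set of homographies mapping $\pi_1$ to $\pi_2$ and fixing $\li$ is a coset of $I_1=\PGL(3,q^3)_{\pi_1,\li}$; its restriction to $\li$ is a coset of $\langle\Gamma\rangle$ inside the order-$2(q^2+q+1)$ stabiliser of $\ES$ from Lemma~\ref{spl-gp-eg}, and that coset is $\langle\Gamma\rangle$ itself precisely when the two subplanes have their \psline s in the same family --- which is exactly what your shared pencil at $P$ guarantees. One therefore gets directly a homography $\tilde\psi$ with $\tilde\psi(\pi_1)=\pi_2$ and $\tilde\psi|_{\li}=\mathrm{id}$, and such a map automatically induces your $\psi$ (each line of $\pi_1$ must go to the unique line of $\pi_2$ through the same, now fixed, splash point); your homology-versus-fixed-line endgame then goes through. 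Two smaller points: you should note explicitly that if the intersection exceeds $q+2$ points it contains a quadrangle and the planes coincide; and your argument inherits the hypothesis $q>2$ from Theorem~\ref{defn:sline} (for $q=2$ every $3$-subset of $\li$ is an \orsl, so the two-family structure is unavailable), so $q=2$ needs separate treatment, as indeed it does in the paper's own proof.
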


\begin{proof} By Theorem~\ref{transsplashes} and Example~\ref{splash-eg}, we can without loss of generality let $\pi_1=\PG(2,q)$ with exterior splash $\ES_1=\{\car_1+\theta \car_2\st
\theta^{q^2+q+1}=-1,\theta\in\GF(q^3)\}$ on the exterior line $\elltau$ where $\car_1=(1,\tau,\tau^2)$, $\car_2=\car_1^q=(1,\tau^q,\tau^{2q})$ are the carriers of $\ES_1$. 
Let $\pi_2$ be an \orsp\ exterior to $\ell$ with exterior splash $\ES_2$ on $\ell$. 
We show that if $\ES_1=\ES_2$ and  $\pi_1,\pi_2$ contain $q+2$ common points, then $\pi_1=\pi_2$.
By Theorem~\ref{propthm}, as the group $I$ fixing $\pi_1$ and $\ell$ (and hence $\ES_1$) is transitive on the lines of $\pi_1$, without loss of generality we can suppose $\pi_2$ contains the \orsl\  $m=\{M_b=(0,1,b)\st b\in\GF(q)\cup\{\infty\}\}$ of $\pi_1$.  Suppose that $\pi_2$ and $\pi_1$ contain a further common point $P$ not on $m$, so $P=(\mu,\nu,\omega)$ with $\mu,\nu,\omega\in\GF(q^3)$, with $\mu\ne 0$. As $P\in\pi_1$, we have
$\nu/\mu,\omega/\mu\in\GF(q)$.

Let $Q=(0,0,1)=M_\infty\in m$, so the line $PQ$ contains an \orsl\ of $\pi_2$. Without loss of generality, by suitable choice of $\mu,\nu,\omega\in\GF(q^3)$,  let that \orsl\ be determined by the three points $P,\ Q$ and $T=P+Q=(\mu,\nu,\omega+1)$.  That is,  $\pi_2$ is  determined by the  quadrangle $(0,1,1)$, $(0,1,0)$, $P=(\mu,\nu,\omega)$ and $T=(\mu,\nu,\omega+1)$. Further, $\pi_1$ and $\pi_2$ both contain the \orsl\ $m$ and the point $P$. 

We will show that, if $\ES_2=\ES_1$, then  $\mu\in\GF(q)$, and hence $\nu,\omega\in\GF(q)$ and so $T\in\pi_1$. 
The \orsl\ $n$ defined by $P,Q,T=P+Q$ has points  $N_c
=P+cQ=(\mu,\nu,\omega+c)$ for $c\in\GF(q)$. As the points $M_b,N_c$ lie in $\pi_2$ for $b,c\in\GF(q)$, the line $M_bN_c$ 
meets $\ell$ in a point $X_{bc}$ of $\ES_2$. Now $M_bN_c$ 
has coordinates $[c-b\nu+\omega,\ b \mu ,\ -\mu]$, and it meets $\ell$ in the point $X_{bc}=\car_1+\theta_{bc}\car_2$ with
%=(-b\mu+\mu(\tau+\tau^q),\ -b\nu+c+\mu\tau\tau^q-\omega),\ b(\mu\tau\tau^q-\nu(\tau+\tau^q))+c(\tau+\tau^q)+\omega(\tau+\tau^q))$, which is of the form 
%$=\car_1+\theta_{bc}\car_2
%=(1+\theta,\tau+\theta\tau^q,\tau^2+\theta\tau^{2q})$ with parameter
\[
\theta_{bc}
=-\frac{c+b(\mu\tau-\nu)-(\mu\tau^2-\omega)}{c+b(\mu\tau^q-\nu)-(\mu\tau^{2q}-\omega)}.
\]
The point $X_{bc}$ is in the exterior splash $\ES_1$ if and only if $\theta_{bc}^{q^2+q+1}=-1$. We can regard this equation as a multivariate polynomial of degree 3 in $b$ and $c$. Note that if a multivariate polynomial of degree $k<q$ over $\GF(q^3)$ is identically zero over $\GF(q)$, then the polynomial is the zero polynomial. If $q=2$ or $3$, a computer search using Magma~\cite{magma} verifies the result holds, so we assume $q>3$. 
Expanding, and calculating the coefficients of $bc^2$ and $c^2$, and equating  them to zero  gives
\begin{eqnarray*}
0&=&\kappa\tau+\kappa^q\tau^q+\kappa^{q^2}\tau^{q^2}\\
0&=&\kappa\tau^2+\kappa^q\tau^{2q}+\kappa^{q^2}\tau^{2q^2},
\end{eqnarray*}
respectively, where $\kappa=\mu-\mu^{q^2}$, and so $\kappa+\kappa^q+\kappa^{q^2}=0$. That is,  the point $M_bN_c\cap\ell$ lies in the exterior splash  $\ES_1$ for all $b,c\in\GF(q)$ if 
\[\begin{pmatrix}
1&1&1\\
\tau&\tau^q&\tau^{q^2}\\
\tau^2&\tau^{2q}&\tau^{2q^2}\end{pmatrix}
\begin{pmatrix}\kappa\\ \kappa^q\\ \kappa^{q^2}\end{pmatrix}
=\begin{pmatrix}0\\0\\0\end{pmatrix}.
\]
This $3\times 3$ matrix  is invertible,  so the only solution is $\kappa=\kappa^q=\kappa^{q^2}=0$, that is $\mu=\mu^q$, and so $\mu\in\GF(q)$. Hence $\pi_2$ has exterior splash $\ES_1$ if and only if $\mu\in\GF(q)$.
As $\nu/\mu,\omega/\mu\in\GF(q)$, it  follows that $T\in\pi_2\cap\pi_1$. Thus $\pi_2$ and $\pi_1$ share the quadrangle $P,T,(0,1,0),(0,1,1)$ and so $\pi_2=\pi_1$ as required.
\end{proof}

Suppose $\pi_1,\pi_2$ are two exterior \orsps\ with a common exterior splash $\ES$ that meet in $q+1$ points. 
We conject that the two families of \orsls\ of $\ES$ with respect to $\pi_1,\pi_2$ are 
\emph{swapped}. That is, $\pi_1,\pi_2$ correspond to the  \orsps\ $\pi$, $\pi'$ with \orsls\ behaving as in  Theorem~\ref{orsls-same}.

\section{Conclusion}\Label{sec:con}

In this article we looked at the exterior splash as a set of points of $\PG(1,q^3)$, and showed its equivalence to $\GF(q)$-linear sets of rank 3 and size $q^2+q+1$, Sherk surfaces of size $q^2+q+1$, and covers of the circle geometry $CG(3,q)$.
We also investigated properties of an exterior \orsp\ and its corresponding exterior splash, in particular relating \orsls\ of an exterior splash to the corresponding \orsp. Further, we looked at how our construction of exterior splashes related to the projection construction of linear sets.

In 
%further work \cite{BJ-ext2}, 
future work,
we look in the Bruck-Bose representation of $\PG(2,q^3)$ in $\PG(6,q)$, and study exterior splashes in this context, further utilising properties of scattered linear sets of rank 3, as well as properties of hyper-reguli in $\PG(5,q)$.

\end{document}